\newtheorem{lemma}{Lemma}[section]
\newtheorem{theorem}[lemma]{Theorem}
\newtheorem{corollary}[lemma]{Corollary}
\newtheorem{proposition}[lemma]{Proposition}
\theoremstyle{definition}
\newtheorem{definition}[lemma]{Definition}
\newtheorem{remark}[lemma]{Remark}
\newtheorem{assumption}[lemma]{Assumption}
\numberwithin{equation}{section}
\newcommand{\leaveout}[1]{}
\renewcommand{\p@enumii}{}
\newcommand{\DD}[1]{\mathbin{\frac{\rm d }{{\rm d }#1}}}
\newcommand{\ddt}{\DD t}
\newcommand{\ud}{\,{\mathrm d}}
\newcommand{\Div}{{\rm div\,}}
\newcommand{\Grad}{{\nabla }}
\newcommand\R{{\mathbb R}}
\newcommand\C{{\mathbb C}}
\newcommand\Z{{\mathbb Z}}
\newcommand\rplus{{\R_{+}}}
\newcommand\T{{\mathbb T}}
\newcommand{\Ascr}{\mathcal A}
\newcommand{\Bscr}{\mathcal B}
\newcommand{\Cscr}{\mathcal C}
\newcommand{\Dscr}{\mathcal D}
\newcommand{\Gscr}{\mathcal G}
\newcommand{\Hscr}{\mathcal H}
\newcommand{\Lscr}{\mathcal L}
\newcommand{\Nscr}{\mathcal N}
\newcommand{\Oscr}{\mathcal O}
\newcommand{\Rscr}{\mathcal R}
\newcommand{\Uscr}{\mathcal U}
\newcommand{\Yscr}{\mathcal Y}
\newcommand{\Wscr}{\mathcal W}
\newcommand{\dom}[1]{\mathrm{dom}\left(#1\right)}
\newcommand{\Ker}[1]{\ker\left(#1\right)}
\newcommand{\res}[1]{\rho\left(#1\right)}
\newcommand{\re}[0]{\mathrm{Re}\,}
\newcommand{\Ipdp}[2]{\left\langle #1 , #2 \right\rangle}
\newcommand{\set}[1]{\left\lbrace #1 \right\rbrace}
\newcommand{\bigmid}{\bigm\vert}
\newcommand{\Bigmid}{\Bigm\vert}
\newcommand{\bi}{\begin{itemize}}
\newcommand{\ei}{\end{itemize}}
\newcommand{\be}{\begin{enumerate}}
\newcommand{\ee}{\end{enumerate}}
\newcommand{\Afrak}{\mathfrak A}
\newcommand{\Bfrak}{\mathfrak B}
\newcommand{\Cfrak}{\mathfrak C}
\newcommand{\sbm}[1]{\left[\begin{smallmatrix}#1\end{smallmatrix}\right]}
\newcommand{\bbm}[1]{\begin{bmatrix}#1\end{bmatrix}}
\def\etv{& \hskip-.3em\vrule\hskip-.3em &} 
\def\smalletv{&\vrule&} 
\def\smallcrh{\vrule height0pt depth2\ex@ width0pt
\cr\noalign{\hrule}
\vrule height6.5\ex@ depth0pt width0pt}
\newbox\smallstrutbox
\def\smallstrut{\relax\ifmmode\copy\smallstrutbox\else\unhcopy\smallstrutbox\fi}
\newenvironment{sysmatrix}{
\let\|=\etv
\hskip \arraycolsep
\begin{matrix}}
{\end{matrix}
\hskip \arraycolsep
}       
\newenvironment{smallsysmatrix}{\null\,\vcenter\bgroup
\let\|=\smalletv

\def\\{\smallstrut\math@cr}
\restore@math@cr\default@tag
\baselineskip\z@skip \lineskip\z@skip \lineskiplimit\lineskip
\ialign\bgroup\hfil$\m@th\scriptstyle##$\hfil&&\thickspace\hfil
$\m@th\scriptstyle##$\hfil\crcr
\crcr\noalign{\vskip -.3\ex@}%
}{\crcr\noalign{\vskip -.2\ex@}%
\crcr\egroup\egroup\,%
}
\renewcommand{\dom}[1]{{\mathcal D (#1)}}
\renewcommand{\ker}{\mathcal{N}}
\begin{document}

\title[Approximate robust control of boundary control systems]{Approximate robust output regulation of boundary control systems}

\author{Jukka-Pekka Humaloja}
\address{Tampere University of Technology, Mathematics, P.O. Box 553, 33101, Tampere, Finland}
\email{jukka-pekka.humaloja@tut.fi}
\email{lassi.paunonen@tut.fi}
\author{Mikael Kurula}
\address{\AA bo Akademi University, Mathematics and Statistics, Domkyrkotorget 1, 20500 \AA bo, Finland}
\email{mkurula@abo.fi}
\author{Lassi Paunonen}

\thispagestyle{plain}

\begin{abstract}
We extend the internal model principle for systems with boundary control and boundary observation, and construct a robust controller for this class of systems. However, as a consequence of the internal model principle, any robust controller for a plant with infinite-dimensional output space necessarily has infinite-dimensional state space. We proceed to formulate the approximate robust output regulation problem and present a finite-dimensional controller structure to solve it. Our main motivating example is a wave equation on a bounded multidimensional spatial domain with force control and velocity observation at the boundary. In order to illustrate the theoretical results, we construct an approximate robust controller for the wave equation on an annular domain and demonstrate its performance with numerical simulations.
\end{abstract}

\subjclass[2010]{93C05, 93B52 (93D15, 35G15, 35L05)}

\keywords{Robust output regulation, approximate robust output regulation, boundary control system, wave equation, exponential stability}

\thanks{The research is supported by the Academy of Finland Grant number 310489 held by L. Paunonen. L. Paunonen is funded by the Academy of Finland grant number 298182.}

\maketitle
\section{Introduction}

Intuitively speaking, the problem of output regulation of a given plant amounts to designing an output feedback controller which stabilizes the plant, and in addition the output of the plant tracks a given reference signal in spite of a given disturbance signal. If a single controller solves the output regulation problem for the plant and also for small perturbations of the plant, and for more or less arbitrary reference and disturbance signals, then the controller is said to solve the \emph{robust} output regulation problem. See the beginning of \S\ref{sec:ROR} for exact definitions of these concepts.

Output tracking and disturbance rejection have been studied actively in the literature for distributed parameter systems with bounded control and observation operators \cite{ByrLau00,ImmPoh06b,NatGil14,Deu15,XuDub16} and robust controllers have been constructed for classes of systems with unbounded control and observation operators, such as well-posed \cite{StafBook} and regular \cite{WeissFeedback94} systems, in \cite{HamPoh00,RebWei03,BouHad09,Pau16a}. The key in designing robust controllers is the \emph{internal model principle} which in its classical form states that a controller can solve the robust output regulation problem only if it contains $p$ copies of the dynamics of the exosystem, where $p$ is the dimension of the output space of the plant. The internal model principle was first presented for finite-dimensional linear plants by Francis and Wonham \cite{FraWon75a} and Davison \cite{Dav76}. The principle was later generalized to infinite-dimensional linear systems in \cite{Pau16a, PauPoh10,PauPoh14} under the assumption that the plant is regular.

In this paper, we focus on output regulation for boundary controlled systems with boundary observation. Our motivating example is a wave equation on a multidimensional spatial domain, with force control and velocity observation on a part of the boundary. This $n$-D wave system is challenging from the robust control point of view since it is neither regular nor well-posed. Moreover, the output space of the wave system is infinite-dimensional and then the \emph{internal model principle} implies that any robust controller must also be infinite-dimensional. However, as the main contribution of this paper, we demonstrate that it is possible to achieve \textit{approximate} tracking of the reference signal in the sense that the difference between the output and the reference signal becomes small as $t\to \infty$. More precisely, we introduce a new finite-dimensional controller that solves the robust output regulation problem in this approximate sense, hence extending the recent results of \cite{Pau16barxiv} to continuous time. At the same time, we extend the class of reference signals that can be tracked. As a part of the construction of this controller, we present an upper bound for the regulation error.

The second main result of this paper is a generalization of the internal model principle presented in \cite{PauPoh10,PauPoh14} to boundary control systems that are not necessarily regular linear systems. The sufficiency of the internal model for achieving robust control has been presented in \cite{HumPau17}, albeit here our formulation is more general in terms of boundary controls and disturbances. The necessity of the internal model is a new result for boundary control systems. 

As our third main contribution we characterize and construct a minimal finite dimensional controller to solve the output regulation problem. Due to the reduced size of the controller, it does not have any guaranteed robustness properties. The controller concept was presented for regular linear systems in \cite{Pau16a}, and here we will generalize such controllers for boundary control systems.

In \S\ref{sec:firstord}, we present the wave equation and show how it fits into the abstract framework of the later sections. In \S\ref{sec:concontr}, we present the abstract plant, the exosystem and the controller (which is to be constructed), and reformulate the interconnection of these three systems as a regular input/state/output system. In \S\ref{sec:ROR}, we present the output regulation, the robust output regulation and the approximate robust output regulation problems, and present controller structures to solve them. A regulating controller without the robustness requirement is presented in \S\ref{ROR:nrrc}, and an approximate robust regulating controller is presented in \S\ref{ROR:arrc}. In \S\ref{ROR:ROR}, we present the internal model principle for boundary control systems, following which we present a precise robust regulating controller in \S\ref{ROR:rrc}. In \S\ref{sec:waveex}, we construct an approximate robust regulating controller for the wave equation on an annular domain and demonstrate its performance with numerical simulation. The paper is concluded in \S\ref{sec:concl}.

Here $\Lscr(X,Y)$ denotes the set of bounded linear operators from the normed space $X$ to the normed space $Y$. The domain, range, kernel, spectrum and resolvent of a linear operator $A$ are denoted by $\Dscr(A), \Rscr(A), \Nscr(A), \sigma(A)$ and $\rho(A)$, respectively. The right pseudoinverse of a surjective operator $P$ is denoted by $P^{[-1]}$.

\section{The wave equation}\label{sec:firstord} 

In this section, we describe the example which motivates the robust output regulation theory in this paper, a wave equation (the plant) on a bounded domain $\Omega\subset\R^n$ with force control and velocity observation at a part of the boundary. We try to keep the exposition brief; more details can be found in \cite{KuZw15,TuWeBook,KuZw12}.

Let $\Omega\subset\R^n$ be a bounded domain (an open connected set) with a Lipschitz-continuous boundary $\partial\Omega$ split into two parts $\Gamma_0,\Gamma_1$ such that $\overline{\Gamma_0}\cup\overline{\Gamma_1}=\partial\Omega$, $\Gamma_0\cap\Gamma_1=\emptyset$, and $\partial\Gamma_0,\partial\Gamma_1$ both have surface measure zero. We consider the wave equation 
\begin{equation}\label{eq:waveintro}
  \left\{
    \begin{aligned}
        \rho(\zeta)\frac{\partial^2 w}{\partial t^2} (\zeta,t) &= \Div \big(T(\zeta)\,\Grad w\,(\zeta,t)\big), \quad \zeta\in\Omega,\\
   u (\zeta,t) &= \nu\cdot T(\zeta)\,\Grad w(\zeta,t), \quad \zeta\in\Gamma_1,\\
    y (\zeta,t) &= \frac{\partial w}{\partial t}(\zeta,t), \qquad \zeta\in\Gamma_1,\\
    0 &= \frac{\partial w}{\partial t}(\zeta,t),\qquad \zeta\in\Gamma_0,~t>0 \\
    w(\cdot,0) &= w_0,\qquad \frac{\partial w}{\partial t}(\cdot,0)=w_1,
    \end{aligned}\right.
\end{equation}
where $w(\zeta,t)$ is the displacement from the equilibrium at the point $\zeta\in\Omega$ and time $t\geq0$, $\rho(\cdot)$ is the mass density, $T^*(\cdot)=T(\cdot)\in L^2(\Omega;\R^n)$ is Young's modulus and $\nu\in L^\infty(\partial\Omega;\R^n)$ is the unit outward normal at $\partial\Omega$. We require $\rho(\cdot)$ and $T(\cdot)$ to be essentially bounded from both above and below away from zero. Please note that the input $u$ is the force perpendicular to $\Gamma_1$ and the output $y$ is the velocity at $\Gamma_1$ while waves are reflected at the part $\Gamma_0$ of the boundary where the displacement is constant. 

In order to solve the robust output regulation problem for the wave system, we shall need to stabilize \eqref{eq:waveintro} exponentially using a viscous damper on $\Gamma_1$, 
which corresponds to the output feedback
$$
	u(\zeta,t)=-b^2(\zeta)\, y(\zeta,t),\qquad \zeta\in\Gamma_1,~t\geq0.
$$
This requires that we make some additional assumptions solely for the purpose of obtaining exponential stability (see \S\ref{sec:damper} below for more details). Additionally, to prove later on that the velocity observation on $\Gamma_1$ is admissible, we assume that
\begin{equation}\label{eq:dampass}
	\delta:=\inf_{\zeta\in\Gamma_1} b(\zeta)^2>0.
\end{equation}

\subsection{The wave equation as a formal boundary control system}

Our first step is to show that the wave equation on a bounded domain in $\R^n$ can be written as a boundary control system (BCS) in the sense of~\cite{CuZwbook}. To this end, we first write the wave equation
$$
	\rho(\zeta)\,\displaystyle \frac{\partial^2 w}{\partial t^2} (\zeta,t) = 
	\mathrm{div}\,\big(T(\zeta)\,\Grad w(\zeta,t)\big)
	\qquad\text{on}\quad \Omega\times\rplus
$$ 
in the first-order form (as an equality in $L^2(\Omega)^{n+1}$) 
\begin{equation}\label{eq:1orderWave}
  \displaystyle \ddt\bbm{\rho(\cdot)\,\dot w(\cdot,t)\\\Grad w(\cdot,t)}=\bbm{0&\mathrm{div}\\\Grad&0}
  \bbm{1/\rho(\cdot)&0\\0&T(\cdot)}\bbm{\rho(\cdot)\,\dot w(\cdot,t)\\\Grad w(\cdot,t)},
\end{equation}
where $\mathrm{div}$ denotes the (distribution) divergence operator and $\Grad$ is the (distribution) gradient. Hence, the state at any time is the pair of momentum and strain densities on $\Omega$. 

Under the standing assumptions on $\rho$ and $T$, the operator of multiplication by $\Hscr:=\sbm{1/\rho(\cdot)&0\\0&T(\cdot)}$ defines an inner product on $L^2(\Omega)^{n+1}$ via
$$
	\Ipdp{x}{z}_\Hscr:=\Ipdp{\Hscr x}{z}_{L^2(\Omega)^{n+1}}
$$
and $\Ipdp{\cdot}{\cdot}_{\mathcal{H}}$ is equivalent to $\Ipdp{\cdot}{\cdot}_{L^2(\Omega)^{n+1}}$.
The space $L^2(\Omega)^{n+1}$ equipped with this equivalent inner product is denoted by $X_\Hscr$ and will be used as the state space of the plant.

We next introduce some function spaces for the wave equation. The notation $H^1(\Omega)$ stands for the Sobolev space of all elements of $L^2(\Omega)$ whose distribution gradient lies in $L^2(\Omega)^n$ and $H^1(\Omega)$ is equipped with the graph norm of the gradient. Similarly $H^{\mathrm{div}}(\Omega)$ is the space of all elements of $L^2(\Omega)^n$ whose distribution divergence lies in $L^2(\Omega)$, equipped with the graph norm of $\mathrm{div}$. In order for \eqref{eq:1orderWave} to make sense as an equation in $L^2(\Omega)^{n+1}$, we need for every fixed $t\geq0$ that $\dot w(\cdot,t)\in H^1(\Omega)$, $\Grad w(\cdot,t)\in L^2(\Omega)$, and $T(\cdot)\,\Grad w(\cdot,t)\in H^{\mathrm{div}}(\Omega)$, or equivalently 
$$
	\bbm{\rho\, \dot w(t)\\\Grad w(t)}\in\Hscr^{-1}
		\bbm{H^1(\Omega)\\H^{\mathrm{div}}(\Omega)},\qquad t\geq0.
$$

If $\Gamma_0=\emptyset$, then the output $y$ lives in the fractional-order space $H^{1/2}(\partial\Omega)$ on the boundary of $\Omega$ (see, e.g.,\ \cite[\S 13.5]{TuWeBook} or \cite{KuZw12}). This space is important to us also when $\Gamma_0\neq\emptyset$, because the \emph{Dirichlet trace} $\gamma_0$ maps $H^1(\Omega)$ continuously onto $H^{1/2}(\partial\Omega)$. Indeed, we set
$$
\begin{aligned}
	\Wscr &:= \set {w\in H^{1/2}(\partial\Omega)\Bigmid w\big|_{\Gamma_0}=0}
	\qquad\text{with}\\
	\|w\|_\Wscr&:=\left\|\gamma_0^{[-1]}\,w\right\|_{H^1(\Omega)},
\end{aligned}
$$
where $|$ denotes the restriction to a given subdomain in the appropriate sense and 
$$
	\gamma_0^{[-1]}:=\gamma_0\big|_{\Ker{\gamma_0}^\perp}^{-1}
		\in\Lscr\big(H^{1/2}(\partial\Omega);H^1(\Omega)\big).
$$
Moreover, we introduce
$$
	H^1_{\Gamma_0}(\Omega) := \set{g\in H^1(\Omega)\bigmid g\big|_{\Gamma_0}=0},
$$
with the norm inherited from $H^1(\Omega)$. This setup makes both $\Wscr$ and $H^1_{\Gamma_0}(\Omega)$ Hilbert spaces; indeed, $H^{1/2}(\partial\Omega)$ is continuously embedded into $L^2(\partial\Omega)$ by \cite[(13.5.3)]{TuWeBook}, and so $H^1_{\Gamma_0}(\Omega)$ is the kernel of $P_{\Gamma_0}\gamma_0\in\Lscr\big(H^1(\Omega),L^2(\partial\Omega)\big)$, where $P_{\Gamma_0}$ is the orthogonal projection onto $L^2(\Gamma_0)$ in $L^2(\partial\Omega)$. This proves that $H^1_{\Gamma_0}(\Omega)$ is a Hilbert space, and moreover, $\gamma_0$ maps the Hilbert space $H^1_{\Gamma_0}(\Omega)\ominus \Ker{\gamma_0}$ unitarily onto $\Wscr$ which is then also complete.

The embedding $\iota:\Wscr\to L^2(\Gamma_1)$ is continuous, because $\iota=P_{\Gamma_1}
\widetilde\iota\gamma_0\gamma_0^{[-1]}$, where $\widetilde\iota$ is the continuous embedding of $H^{1/2}(\partial\Omega)$ into $L^2(\partial\Omega)$. The embedding is also dense by \cite[Thm 13.6.10]{TuWeBook}, so that we may define $\Wscr'$ as the dual of $\Wscr$ with pivot space $L^2(\Gamma_1)$ (see \cite[\S2.9]{TuWeBook}). Then in particular
$$
	\Ipdp\omega w_{\Wscr',\Wscr}=\Ipdp\omega w_{L^2(\Gamma_1)},\qquad
		\omega\in L^2(\Gamma_1),\, w\in\Wscr.
$$
Thm 1.8 in Appendix 1 of \cite{KuZw15} states that the \emph{restricted normal trace} $\gamma_\perp h:=(\nu\cdot\gamma_0 h)\big|_{\Gamma_1}$, $h\in H^1(\Omega)^n$, has a unique extension to a continuous operator (still denoted by $\gamma_\perp$) that maps $H^{\mathrm{div}}(\Omega)$ \emph{onto} $\Wscr'$. Please note that $\gamma_\perp$ is \emph{not} the Neumann trace $\gamma_N$: If $\Gamma_0=\emptyset$, then $\Wscr=H^{1/2}(\partial\Omega)$ and the relation between the two operators is $\gamma_N x=\gamma_\perp\,\Grad x$, for a sufficiently regular $x$, where the equality is in $H^{-1/2}(\partial\Omega)$. The space $H^{-1/2}(\partial\Omega)$ equals $\Wscr'$ in the case where $\Gamma_0=\emptyset$ (which is not the main case of interest to us, see \eqref{eq:bdrpart} below).

Now we include the boundary condition at $\Gamma_0$ into the domain of $\sbm{0&\Div\\\Grad&0}\Hscr$, see \eqref{eq:1orderWave}, by requiring that $\dot w\in H^1_{\Gamma_0}(\Omega)$ instead of the weaker $\dot w\in H^1(\Omega)$ which we motivated above. We can then write \eqref{eq:waveintro} as
\begin{equation}\label{eq:undampedODE}
\left\{\begin{aligned}
		\dot x(t) &= \Afrak\Hscr x(t), \\
		u(t) &= \Bfrak\Hscr x(t), \\
		y(t) &= \Cfrak\Hscr x(t),
\end{aligned}\right.\quad t\geq0,\qquad x(0) = \bbm{\rho\, w_0'\\\Grad w_0},
\end{equation}
where $x(t)=\sbm{\rho\,\dot w(t)\\\Grad w(t)}$ is the state at time $t$, $\Afrak=\sbm{0&\mathrm{div}\\\nabla&0}$, $\Bfrak=\bbm{0&\gamma_\perp}$, and $\Cfrak=\bbm{\gamma_0&0}$, with domains
$$
	\dom\Afrak:=\dom\Bfrak:=\dom\Cfrak:=
		\bbm{ H^1_{\Gamma_0}(\Omega) \\ H^{\mathrm{div}}(\Omega)}
	\subset X_\Hscr,
$$
which is Hilbert when equipped with the graph norm of $\Afrak$. Here $X_\Hscr$ is the state space, $U=\Wscr'$ the input space, and $Y=\Wscr$ the output space. 

In \cite[Thm\ 3.2]{KuZw15} it was shown that \eqref{eq:undampedODE} has the structure of a \emph{boundary triplet} (or abstract \emph{boundary space} in the original terminology of \cite[\S3.1.4]{GoGoBook}). This easily implies that the undamped wave equation is a boundary control system in the sense of Curtain and Zwart \cite[Def.\ 3.3.2]{CuZwbook}:

\begin{definition}\label{def:BCS}
Let the \emph{state space} $X$ and \emph{input space} $U$ be Hilbert spaces, and let $\Ascr: X\supset\dom\Ascr\to X$ and $\Bscr: X\supset\dom\Bscr\to U$ be linear operators with $\dom\Ascr\subset\dom\Bscr$.

The control system $\dot  x(t)=\Ascr\,  x(t)$, $\Bscr\, x(t)=u(t)$, $t\geq0$, $ x(0)= x_0$, is called a \emph{boundary control system (BCS)} if the following conditions are met:
\begin{enumerate} 
\item The operator $A:=\Ascr\big|_{\dom A}$ with domain 
$$
	\dom A:=\dom\Ascr\cap\Ker\Bscr
$$ 
generates a $C_0$-semigroup on $X$ and
\item there exists a $B\in\Lscr(U,X)$ such that $BU\subset\dom\Ascr$, $\Ascr B\in\Lscr(U,X)$, and $\Bscr B=I_U$.
\end{enumerate}
An output equation may be added to the BCS by setting $y(t)=\Cscr x(t)$, where $\Cscr$ is a linear operator defined on $\dom\Cscr\supset\dom\Ascr$ and mapping into some Hilbert \emph{output space} $Y$, with the additional property that $\Cscr B\in\Lscr(U,Y)$. We shall briefly say that $(\Bscr,\Ascr,\Cscr)$ is a BCS on $(U,X,Y)$ if all of the above conditions are met. Finally, we say that a BCS on $(U,X,Y)$ is \emph{(impedance) passive} if the input space $U$ can be identified with the dual $Y'$ of the output space and
$$
	\re\Ipdp{\Ascr x}{x}_X\leq\re\Ipdp{\Bscr x}{\Cscr x}_{Y',Y},\qquad x\in\dom\Ascr.
$$
\end{definition}

For more information on abstract passive BCS, we refer to \cite{MaSt06,MaSt07}. Unlike the setting of Malinen and Staffans, the original definition of Curtain and Zwart does not consider the observation operator $\Cscr$ or passivity, and it is not assumed that $\dom\Ascr$ is a Hilbert space. The robust output regulation theory presented in \S\ref{sec:ROR} below is formulated for the general, abstract systems in Definition \ref{def:BCS}.

We now return to the particular case of the wave equation \eqref{eq:undampedODE}. However, later we shall need to use $L^2(\Gamma_1)$ as both input and output space rather than $\Wscr'$ and $\Wscr$. Fortunately, this can be achieved by restricting $(\Bfrak\Hscr,\Afrak\Hscr,\Cfrak\Hscr)$: Choose the new input space as $\Uscr:=L^2(\Gamma_1)$ and set
$$
	\dom{\widetilde\Afrak}=\set{x\in\Hscr^{-1}\dom\Afrak\bigmid \Bfrak\Hscr x\in L^2(\Gamma_1)}
$$
with the norm given by
$$
	\|x\|_{\dom{\widetilde\Afrak}}^2 := 
	\|\Hscr x\|_{X_\Hscr}^2+\|\Ascr\Hscr x\|_{X_\Hscr}^2+\|\Bscr\Hscr x\|_\Uscr^2.
$$
Furthermore, we define the restrictions
$$
	\widetilde \Afrak:=\Afrak\Hscr\big|_{\dom{\widetilde\Afrak}},\;
	\widetilde \Bfrak:=\Bfrak\Hscr\big|_{\dom{\widetilde\Afrak}},\;
	\widetilde \Cfrak:=\iota\Cfrak\Hscr\big|_{\dom{\widetilde\Afrak}},
$$
where $\iota:\Wscr\to L^2(\Gamma_1)$ is again the (continuous) injection.

\begin{theorem}\label{thm:WaveL2}
The triple $(\widetilde\Bfrak,\widetilde\Afrak,\widetilde\Cfrak)$ is a passive BCS on $(\Uscr,X_\Hscr,\Uscr)$.
\end{theorem}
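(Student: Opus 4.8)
The plan is to derive each of the four conditions in Definition~\ref{def:BCS} for the restricted triple $(\widetilde\Bfrak,\widetilde\Afrak,\widetilde\Cfrak)$ by transporting the corresponding property of the unrestricted triple $(\Bfrak\Hscr,\Afrak\Hscr,\Cfrak\Hscr)$, which is a BCS on $(\Wscr',X_\Hscr,\Wscr)$ by the boundary-triplet structure of \eqref{eq:undampedODE} established in \cite{KuZw15}, and which is passive because that boundary triplet provides an abstract Green identity giving the energy balance $\re\Ipdp{\Afrak\Hscr x}{x}_{X_\Hscr}=\re\Ipdp{\Bfrak\Hscr x}{\Cfrak\Hscr x}_{\Wscr',\Wscr}$. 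The single structural fact driving everything is that the continuous, dense embedding $\iota:\Wscr\to L^2(\Gamma_1)$ dualizes, with $L^2(\Gamma_1)$ as pivot, to a continuous embedding $L^2(\Gamma_1)\hookrightarrow\Wscr'$; hence the $L^2(\Gamma_1)$-norm dominates the $\Wscr'$-norm on $\Uscr=L^2(\Gamma_1)$, so that restricting an operator bounded on $\Wscr'$ to the subspace $\Uscr$ keeps it bounded.

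For condition~(1) I would observe that the semigroup generator is literally unchanged by the restriction. The restricted generator has domain $\dom{\widetilde A}=\dom{\widetilde\Afrak}\cap\Ker{\widetilde\Bfrak}$, consisting of those $x\in\Hscr^{-1}\dom\Afrak$ with $\Bfrak\Hscr x\in L^2(\Gamma_1)$ \emph{and} $\Bfrak\Hscr x=0$; since $0\in L^2(\Gamma_1)$ the first requirement is vacuous on $\Ker{\Bfrak\Hscr}$, whence $\dom{\widetilde A}=\Hscr^{-1}\dom\Afrak\cap\Ker{\Bfrak\Hscr}=\dom A$ and $\widetilde A=A$. Thus $\widetilde A$ generates the very same $C_0$-semigroup on $X_\Hscr$ as the unrestricted system.

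For conditions~(2) and~(3) I would take $\widetilde B:=B\big|_\Uscr$, the restriction to $\Uscr$ of the input lifting $B\in\Lscr(\Wscr',X_\Hscr)$ of the unrestricted system. The embedding $L^2(\Gamma_1)\hookrightarrow\Wscr'$ makes $\widetilde B\in\Lscr(\Uscr,X_\Hscr)$ and $\widetilde\Afrak\widetilde B=(\Afrak\Hscr B)\big|_\Uscr\in\Lscr(\Uscr,X_\Hscr)$; the identity $\Bfrak\Hscr Bu=u\in L^2(\Gamma_1)$ simultaneously gives $\widetilde\Bfrak\widetilde B=I_\Uscr$ and shows $\widetilde B\Uscr\subset\dom{\widetilde\Afrak}$ (with finite $\dom{\widetilde\Afrak}$-norm, since each of the three defining terms is controlled by $\|u\|_\Uscr$). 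For the output, $\widetilde\Cfrak x=\iota\Cfrak\Hscr x\in L^2(\Gamma_1)$ is well defined because $\Cfrak\Hscr x\in\Wscr$, and $\widetilde\Cfrak\widetilde B=\iota(\Cfrak\Hscr B)\big|_\Uscr\in\Lscr(\Uscr,\Uscr)$ because $\Cfrak\Hscr B\in\Lscr(\Wscr',\Wscr)$ and $\iota$ is continuous.

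For passivity, condition~(4), the space $\Uscr=L^2(\Gamma_1)$ is self-dual, so the identification $U=Y'$ is the Riesz one. For $x\in\dom{\widetilde\Afrak}\subset\Hscr^{-1}\dom\Afrak$ we have $\widetilde\Bfrak x=\Bfrak\Hscr x\in L^2(\Gamma_1)$ and $\widetilde\Cfrak x=\iota\Cfrak\Hscr x$, and the compatibility relation $\Ipdp\omega w_{\Wscr',\Wscr}=\Ipdp\omega w_{L^2(\Gamma_1)}$ collapses the unrestricted pairing to the $L^2(\Gamma_1)$-inner product, giving $\Ipdp{\Bfrak\Hscr x}{\Cfrak\Hscr x}_{\Wscr',\Wscr}=\Ipdp{\widetilde\Bfrak x}{\widetilde\Cfrak x}_{L^2(\Gamma_1)}$; combined with the energy balance of the unrestricted system this yields the required inequality (indeed equality). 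The main obstacle --- and essentially the only place care is needed --- is this duality bookkeeping: one must verify that the embedding runs in the direction $L^2(\Gamma_1)\hookrightarrow\Wscr'$ (so restriction preserves boundedness) and that the $\Wscr'$--$\Wscr$ pairing genuinely reduces to the $L^2(\Gamma_1)$-inner product on the elements $\Bfrak\Hscr x$, $\Cfrak\Hscr x$ at hand. Once these are settled, every defining property is inherited from the unrestricted passive BCS.
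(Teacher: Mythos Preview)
Your argument is correct, and it takes a genuinely different route from the paper's own proof. You treat the restricted triple as obtained from the already-established passive BCS $(\Bfrak\Hscr,\Afrak\Hscr,\Cfrak\Hscr)$ on $(\Wscr',X_\Hscr,\Wscr)$ via the continuous embedding $L^2(\Gamma_1)\hookrightarrow\Wscr'$, so that the generator is literally unchanged, the lifting $\widetilde B$ is just $B\big|_\Uscr$, and passivity follows from the pivot-space identity $\Ipdp\omega w_{\Wscr',\Wscr}=\Ipdp\omega w_{L^2(\Gamma_1)}$. The paper instead argues more directly: it proves from scratch that $\widetilde\Afrak\big|_{\ker(\widetilde\Bfrak)}$ is skew-adjoint on $X_\Hscr$ (transporting the skew-adjointness of $\Afrak\big|_{\ker(\Bfrak)}$ on $L^2(\Omega)^{n+1}$ from \cite[Cor.~3.4]{KuZw15} through the $\Hscr$-inner product) and invokes Stone's theorem, constructs $\widetilde B$ as the pseudoinverse $\widetilde\Bfrak^{[-1]}$ using surjectivity of $\gamma_\perp$, and derives passivity by applying the integration-by-parts formula of \cite{KuZw15} directly. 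Your approach is more economical and makes transparent that nothing about the semigroup changes under restriction; the paper's approach is more self-contained (it does not lean on the assertion, made just before the theorem, that the unrestricted system is a BCS) and yields the extra information that the semigroup is in fact unitary.
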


\begin{proof}
We first note that $\ker(\widetilde\Bfrak)=\Hscr^{-1}\ker(\Bfrak)\subset\dom{\widetilde\Afrak}$ and then we prove that $\widetilde\Afrak\big|_{\ker(\widetilde\Bfrak)}$ generates a unitary group on $X_\Hscr$. It follows from \cite[Cor.\ 3.4]{KuZw15} that $\Afrak\big|_{\ker(\Bfrak)}$ is a skew-adjoint, unbounded operator on $L^2(\Omega)^{n+1}$ 
  and we will show that this implies that $\widetilde\Afrak\big|_{\ker(\widetilde\Bfrak)}$ is skew-adjoint on $X_\Hscr$. Indeed, for an arbitrary fixed $z\in X_\Hscr$, there exists $w\in X_\Hscr$ such that for all $ x\in\ker(\widetilde\Bfrak)=\Hscr^{-1}\ker(\Bfrak)$ we have
 \begin{equation}\label{eq:Afrakadj1}
  \Ipdp{x}{w}_{X_\Hscr} = \Ipdp{\widetilde\Afrak x}{z}_{X_\Hscr} = 
  \Ipdp{\Afrak\Hscr x}{\Hscr z}_{L^2(\Omega)^{n+1}}
\end{equation}
if and only if $\Hscr z\in\dom{\Afrak\big|_{\ker(\Bfrak)}^*}=\Ker\Bfrak$, where the adjoint is computed with respect to the inner product in $L^2(\Omega)^{n+1}$. Hence, $\widetilde\Afrak\big|_{\ker(\widetilde\Bfrak)}$ has the same domain as its adjoint with respect to $X_\Hscr$, and for every $z$ in this common domain, \eqref{eq:Afrakadj1} can be continued as
$$
	\Ipdp{\widetilde\Afrak x}{z}_{X_\Hscr} =
	\Ipdp{\Hscr x}{-\Afrak\Hscr z}_{L^2(\Omega)^{n+1}} =
	\Ipdp{x}{-\widetilde\Afrak z}_{X_\Hscr},
$$
for all $x\in\ker(\widetilde\Bfrak)$. By Stone's theorem, $\widetilde\Afrak$ generates a unitary group on $X_\Hscr$.

As $\gamma_\perp$ maps $H^{\mathrm{div}}(\Omega)$ onto $\Wscr'$, it is clear that $\widetilde\Bfrak$ maps $\dom{\widetilde\Afrak}$ onto $\Uscr$, and thus, $\widetilde B:=\widetilde\Bfrak^{[-1]}\in\Lscr\big(\Uscr,\dom{\widetilde\Afrak}\big)$ has the properties in Definition \ref{def:BCS}.2. Moreover,  the $\Afrak$-boundedness of $\Cfrak$ and the fact that $\Hscr\dom{\widetilde\Afrak}$ is continuously embedded in $\dom{\Afrak}$ imply that $\widetilde\Cfrak \widetilde B\in\Lscr(\Uscr,\Wscr)$. Finally,
$$
	\re\Ipdp{\widetilde\Afrak x}{x}_{X_\Hscr} = 
	\re\Ipdp{\widetilde\Bfrak x}{\widetilde\Cfrak x}_\Uscr,
	\qquad x\in\Dscr \left(\widetilde\Afrak\right),
$$
follows from the following integration by parts formula which was established in the appendix of \cite{KuZw15}, valid for all $h\in H^{\mathrm{div}}(\Omega)$ and $g\in H^1_{\Gamma_0}(\Omega)$:
$$
	\Ipdp{\Div h}{g}_{L^2(\Omega)}+\Ipdp{h}{\Grad g}_{L^2(\Omega)^n} =
	\Ipdp{\gamma_\perp f}{\gamma_0 g}_{\Wscr',\Wscr};
$$
recall that $\Wscr'$ is the dual of $\Wscr$ with pivot space $\Uscr$ and that $\widetilde\Bfrak x\in \Uscr$ for $x\in\dom{\widetilde\Afrak}$.
\end{proof}

\subsection{Exponential stabilization and admissible observation}\label{sec:damper}

The robust controller design in \S\ref{sec:ROR} involves exponential stabilization of the plant with output feedback, and in this section we will comment on this problem for the wave equation~\eqref{eq:waveintro}.
We shall use a special case of a result by Guo and Yao~\cite{GuoYao02} to obtain exponential stabilization using the so-called \emph{multiplier method}. The case where all physical parameters are identity was covered also in \cite{TuWeBook}, see \cite{CuWe06,BaLeRa92} for other related results. 

In order to apply the multiplier method, we assume that the boundary $\partial\Omega$ is of class $C^2$ and that it is partitioned into the reflecting part $\Gamma_0$ and the input/output part $\Gamma_1$ in the following way (see \cite[Chap.\ 7]{TuWeBook} for a longer discussion): Fix $\zeta^0\in\R^n\setminus\overline\Omega$ arbitrarily and define $m(\zeta):=\zeta-\zeta^0$, $\zeta\in\R^n$. We assume that
\begin{equation}\label{eq:bdrpart}
\begin{aligned}
	\Gamma_0 &= \mathrm{int}\,\set{\zeta\in\partial\Omega\mid m(\zeta)\cdot \nu(\zeta)\leq0 }\neq\emptyset \qquad\text{and}\\
	\Gamma_1 &= \set{\zeta\in\partial\Omega\mid m(\zeta)\cdot \nu(\zeta)>0 }\neq\emptyset,
\end{aligned}
\end{equation}
and that the sets $\Gamma_0,\Gamma_1\subset \partial\Omega$ form a partition of the boundary $\partial\Omega$ in the sense that $\overline{\Gamma_0}\cup\overline{\Gamma_1}=\partial\Omega$. In our wave equation, we add a viscous damper 
$u=-b^2y$ on $\Gamma_1$, where
\begin{equation}\label{eq:particulardamper}
	b(\zeta)^2:=m(\zeta)\cdot\nu(\zeta), \qquad\zeta\in\Gamma_1.
\end{equation}
This damper is rigorously interpreted as the following equation in $\Wscr'$: 
$$
	\gamma_\perp T\,\Grad w(t)=-b^2\,\gamma_0\,\dot w(t),\qquad t\geq0.
$$

In order to guarantee \emph{exponential stability}, we do not need to explicitly make the common, but rather restrictive, assumption that $\overline{\Gamma_0}\cap\overline{\Gamma_1}=\emptyset$. However,
combining the assumption that $\partial\Omega$ is of class $C^2$ with the assumption \eqref{eq:dampass} that we need for the admissibility of velocity observation, we unfortunately seem to end up in a situation where necessarily $\overline{\Gamma_0}\cap\overline{\Gamma_1}=\emptyset$.

The total energy associated to a solution $x=\sbm{g\\h}$ of the wave equation in Thm\ \ref{thm:WaveL2} at time $t$ is 
$$
	\displaystyle \frac12\left\|\bbm{g(t)\\h(t)}\right\|_{X_\Hscr}^2 :=
	\frac12\int_\Omega \frac1{\rho(\zeta)}\,g(\zeta,t)^2+h(\zeta,t)^* T(\zeta) h(\zeta,t)\ud\zeta,
$$
representing the sum of kinetic and potential energy.

\begin{theorem}\label{thm:waveexpstab}
Assume that $\rho$ and $T$ are constant, that $\Omega\subset\R^n$ is a bounded $C^2$-domain with $n\leq 3$, and that $\Gamma_k$ satisfy \eqref{eq:bdrpart}. Then there exist $c>1$ and $\omega>0$, such that all $\sbm{g\\h}\in C^1(\rplus;X_\Hscr)$ with $\ddt\sbm{g(t)\\h(t)}=\Ascr\Hscr\sbm{g(t)\\h(t)}$ and $\gamma_\perp T\,h(t)=-(m\cdot \nu)\,\gamma_0\,g(t)$ for $t\geq0$, and $h(0)\in \Grad H^1_{\Gamma_0}(\Omega)$, satisfy
\begin{equation}\label{eq:expstab}
	\left\|\bbm{g(t)\\h(t)}\right\|_{X_\Hscr}^2
	\leq c\,e^{-\omega t}\,\left\|\bbm{g(0)\\h(0)}\right\|_{X_\Hscr}^2,
	\qquad t\geq0.
\end{equation}
\end{theorem}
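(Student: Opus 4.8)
The plan is to translate \eqref{eq:expstab} back into a statement about the classical wave equation $\rho\,\ddot w=\Div(T\,\Grad w)$ and then invoke the multiplier estimate of Guo and Yao~\cite{GuoYao02}. Writing $x=\sbm{g\\h}=\sbm{\rho\,\dot w\\\Grad w}$, the quantity $\tfrac12\|x(t)\|_{X_\Hscr}^2$ is exactly the physical energy $E(t)=\tfrac12\int_\Omega \rho\,|\dot w|^2+(\Grad w)^*T\,\Grad w\ud\zeta$, so it suffices to prove $E(t)\le c\,e^{-\omega t}E(0)$. The hypothesis $h(0)\in\Grad H^1_{\Gamma_0}(\Omega)$ guarantees that $h(t)$ remains a gradient for all $t$, so the pair $(g,h)$ genuinely arises from a scalar displacement $w$, and the assumed $C^1$-regularity makes the multiplier computations below rigorous; the general case then follows by density, since the damped dynamics form a contraction semigroup on $X_\Hscr$.

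First I would record the dissipation identity. Differentiating $E$ and integrating by parts using $\rho\,\ddot w=\Div(T\,\Grad w)$, the interior terms cancel and only a boundary term survives:
$$
  \dot E(t)=\int_{\partial\Omega}\dot w\,(\nu\cdot T\,\Grad w)\ud\sigma.
$$
On $\Gamma_0$ the velocity vanishes and on $\Gamma_1$ the damper \eqref{eq:particulardamper} gives $\nu\cdot T\,\Grad w=-b^2\dot w$, whence $\dot E(t)=-\int_{\Gamma_1}b^2\,|\dot w|^2\ud\sigma\le0$. Integrating in time shows that the total dissipated energy equals $E(0)-E(T)$ and is controlled by $\int_0^T\!\int_{\Gamma_1}b^2|\dot w|^2$.

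The heart of the argument is the multiplier estimate with the radial field $m(\zeta)=\zeta-\zeta^0$. Multiplying the equation by $m\cdot\Grad w$, together with a lower-order correction proportional to $w$ to absorb the $\Grad w$-terms, and integrating over $\Omega\times[0,T]$ produces a Rellich–Pohozaev type identity. The partition \eqref{eq:bdrpart} is chosen precisely so that the uncontrolled boundary contributions on $\Gamma_0$, which carry the factor $m\cdot\nu\le0$, have the favorable sign, while the contributions on $\Gamma_1$ are exactly the damping terms $b^2=m\cdot\nu$ already appearing in $\dot E$. For constant $\rho$ and $T$ the Riemannian curvature condition required by Guo and Yao is trivially met (the metric is flat and $m$ is an escape vector field), so their theorem applies and yields a constant $C$, independent of the solution, with
$$
  \int_0^T E(t)\ud t\le C\,\Bigpar{E(0)+\int_0^T\!\int_{\Gamma_1}b^2|\dot w|^2\ud\sigma\ud t}.
$$
Combining this with monotonicity of $E$ and the dissipation identity gives $T\,E(T)\le C'\,E(0)$ for $T$ large, i.e. $E(T)\le\theta\,E(0)$ with $\theta<1$.

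Finally, by time-invariance and the semigroup property, $E(kT)\le\theta^k E(0)$ for every $k\in\zplus$, and since $E$ is nonincreasing this upgrades to $E(t)\le c\,e^{-\omega t}E(0)$ with $\omega=-\tfrac1T\log\theta>0$ and $c=1/\theta$, which is \eqref{eq:expstab}. I expect the main obstacle to be not the iteration but the verification of the geometric hypotheses of Guo and Yao together with the correct sign of the $\Gamma_0$-boundary terms in the multiplier identity; this is where the partition \eqref{eq:bdrpart}, the class-$C^2$ assumption on $\partial\Omega$, and the dimension bound $n\le3$ (which supplies the boundary-trace regularity needed to make the $\Gamma_1$-terms meaningful) all enter, and where the difficulty of a possibly nonempty interface $\overline{\Gamma_0}\cap\overline{\Gamma_1}$ must be confronted. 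Matching the abstract norm $\|\cdot\|_{X_\Hscr}$ with the energy used by Guo and Yao, and justifying the boundary integrals for the prescribed regularity class, are the remaining technical points.
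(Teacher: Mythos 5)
Your proposal is correct and follows essentially the same route as the paper: both reconstruct the scalar displacement $w$ from the pair $(g,h)$ (using $h(0)\in\Grad H^1_{\Gamma_0}(\Omega)$ to define $w$ as a primitive of $g/\rho$, so that the first-order system becomes the classical wave equation with the boundary damper), verify the geometric hypotheses of Guo and Yao for the radial field $m=\zeta-\zeta^0$ under constant coefficients, and then invoke their result. The only difference is presentational: the paper cites \cite[Thm 1]{GuoYao02} as directly yielding the exponential decay \eqref{eq:expstab}, so your dissipation-plus-observability iteration (and the density argument) re-derives machinery the citation already supplies.
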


\begin{proof}
Let $\sbm{g\\h}$ have the properties in the statement and let $\eta\in H^1_{\Gamma_0}(\Omega)$ be such that $\Grad\eta=h(0)$. Setting 
\begin{equation}\label{eq:wavesoldef}
	w(t):=\eta+\frac1\rho\int_0^t g(s)\ud s,\qquad t\geq0,
\end{equation}
we get that $\dot w(t)=g(t)/\rho$ and $\Grad w(t)=h(t)$ for all $t\geq0$. Moreover, $w$ is a classical solution of the wave equation since 
\begin{equation}\label{eq:waveGY}
	\ddot w(t)=\Div\left(\frac T\rho\,\Grad w\right)(t),\qquad t\geq0,
\end{equation}
with the left-hand side in $C\big(\rplus;L^2(\Omega)\big)$. Note that the constant matrix $T/\rho$ is positive definite and hence invertible. 

In \cite[Ex.\ 3.1]{Yao99}, a Riemannian manifold $(\R^n,g)$ is associated to \eqref{eq:waveGY}, and it is concluded that the vector field $H:=\sum_{k=1}^n (\zeta_k-\zeta_k^0)\,\partial/\partial \xi_k$ on this manifold satisfies the condition \cite[(3.2)]{GuoYao02} with $a=1$ (here $\zeta_k$ is coordinate number $k$ of $\zeta$). We further observe that $w(t)\in H^1_{\Gamma_0}(\Omega)$ and $\gamma_\perp T\Grad w(t)=-(m\cdot \nu)\,\gamma_0\,\dot w(t)$ for all $t\geq0$, while $w(0)\in H^1_{\Gamma_0}(\Omega)$, and $\dot w(0)\in L^2(\Omega)$.
By \cite[Thm\ 1]{GuoYao02}, we have \eqref{eq:expstab}.
\end{proof}

In general, a solution $w$ of \eqref{eq:1orderWave} is only required to be \emph{constant} on $\Gamma_0$. The condition $h(0)\in\Grad H^1_{\Gamma_0}(\Omega)$ corresponds to the initial condition $w(0)\in H^1_{\Gamma_0}(\Omega)$ via \eqref{eq:wavesoldef}, and this implies the stronger statement that $w$ is constantly equal to \emph{zero} on $\Gamma_0$. This is one way to guarantee that the potential energy decays to zero.

Returning to the case of the general BCS, we will replace the multiplication by $-m\cdot\nu$ on $L^2(\Gamma_1)$ by an admissible output feedback operator $Q\in\Lscr(\Yscr,\Uscr)$ which stabilizes the given BCS exponentially: Let $(\Bscr,\Ascr,\Cscr)$ be a BCS on $(U,X,Y)$. We call $Q\in\Lscr(Y,U)$ an \emph{admissible (static output) feedback operator} for $(\Bscr,\Ascr,\Cscr)$ if $(\Bscr+Q\Cscr,\Ascr,\Cscr)$ is a also a BCS. Moreover, let the Hilbert spaces $Y$ and $Y'$ be duals with some pivot Hilbert space $\widetilde U$, and let $Q\in\Lscr(Y,Y')$. We say that $Q$ is \emph{uniformly accretive} if there exists some $\delta>0$ such that
$$
	\re\Ipdp{Qy}{y}_{Y',Y}\geq \delta\,\|y\|^2_{\widetilde U},	\qquad y\in Y.
$$ 

By an \emph{admissible observation operator} for a $C_0$-semigroup $\T$ on $X$ with generator $A$, we mean a linear operator $C\in\Lscr(\dom A,Y)$ for which there exist some $\tau>0$ and $K_\tau\geq0$ such that
  \begin{equation}\label{eq:adm}
    \int_0^\tau \|C\,\T(t)\,x\|^2_Y\ud t\leq K_\tau^2\,\|x\|^2_X,
    \qquad \forall x\in\dom A.
  \end{equation}
If \eqref{eq:adm} holds for some $\tau>0$ and $K_\tau\geq0$, then for every $\tau>0$ it is possible to choose a $K_\tau\geq 0$ such that~\eqref{eq:adm} holds. The observation operator is \emph{infinite-time admissible} if \eqref{eq:adm} holds for all $\tau>0$ with $K_\tau$ replaced by some bound $K$ which is independent of $\tau$. In particular, if the semigroup $\T$ is exponentially stable, then every admissible observation operator is infinite-time admissible~\cite[Prop. 4.3.3]{TuWeBook}.

\begin{proposition}\label{prop:adm}
Let $(\Bscr,\Ascr,\Cscr)$ be a \emph{passive} BCS on $(Y',X,Y)$ and let $Q\in\Lscr(Y,Y')$ be a uniformly accretive, admissible output feedback operator for $(\Bscr,\Ascr,\Cscr)$. The resulting BCS $(\Bscr+Q\Cscr,\Ascr,\Cscr)$ is also passive and we denote its associated semigroup by $\T_Q$. The observation operator $\Cscr$, interpreted as an operator mapping into the pivot space $\widetilde Y$ rather than into $Y$, is infinite-time admissible for $\T_Q$. 
\end{proposition}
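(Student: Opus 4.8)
The plan is to establish the three claims in sequence---passivity of the closed loop, existence of the semigroup $\T_Q$, and the infinite-time admissibility estimate---with the last being the only substantial step. First I would dispose of passivity: for $x\in\dom\Ascr$ expand
\[
  \re\Ipdp{(\Bscr+Q\Cscr)x}{\Cscr x}_{Y',Y}
  = \re\Ipdp{\Bscr x}{\Cscr x}_{Y',Y}
  + \re\Ipdp{Q\Cscr x}{\Cscr x}_{Y',Y},
\]
and bound the second summand below by $\delta\,\|\Cscr x\|^2_{\widetilde Y}\geq0$ via the uniform accretivity of $Q$. Together with the passivity of $(\Bscr,\Ascr,\Cscr)$ this yields $\re\Ipdp{\Ascr x}{x}_X\leq\re\Ipdp{(\Bscr+Q\Cscr)x}{\Cscr x}_{Y',Y}$, so the closed-loop BCS is passive. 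Since $Q$ is an admissible feedback operator, $(\Bscr+Q\Cscr,\Ascr,\Cscr)$ is a BCS, and hence by Definition~\ref{def:BCS}.1 its generator $A_Q:=\Ascr\big|_{\dom\Ascr\cap\Ker{\Bscr+Q\Cscr}}$ generates the $C_0$-semigroup $\T_Q$.

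The heart of the argument is an energy balance, and the decisive observation is that on $\dom{A_Q}=\dom\Ascr\cap\Ker{\Bscr+Q\Cscr}$ the feedback constraint $\Bscr x=-Q\Cscr x$ holds. Fixing $x\in\dom{A_Q}$ and writing $x(t):=\T_Q(t)x$, which remains in $\dom{A_Q}$ and is continuously differentiable into $X$ with $\dot x(t)=A_Q x(t)=\Ascr x(t)$, I would derive the dissipation inequality
\[
  \frac{d}{dt}\|x(t)\|^2_X = 2\re\Ipdp{\Ascr x(t)}{x(t)}_X
  \leq -2\delta\,\|\Cscr x(t)\|^2_{\widetilde Y}.
\]
The middle step combines the passivity bound $\re\Ipdp{\Ascr x(t)}{x(t)}_X\leq\re\Ipdp{\Bscr x(t)}{\Cscr x(t)}_{Y',Y}$ with the substitution $\Bscr x(t)=-Q\Cscr x(t)$ and the accretivity estimate $\re\Ipdp{Q\Cscr x(t)}{\Cscr x(t)}_{Y',Y}\geq\delta\,\|\Cscr x(t)\|^2_{\widetilde Y}$.

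Integrating over $[0,\tau]$ and discarding the nonnegative term $\|x(\tau)\|^2_X$ then gives
\[
  \int_0^\tau\|\Cscr\T_Q(t)x\|^2_{\widetilde Y}\ud t
  \leq \frac1{2\delta}\Bigl(\|x\|^2_X-\|\T_Q(\tau)x\|^2_X\Bigr)
  \leq \frac1{2\delta}\,\|x\|^2_X
\]
for every $\tau>0$ and every $x\in\dom{A_Q}$; as the constant $K:=1/\sqrt{2\delta}$ is independent of $\tau$, this is exactly the infinite-time admissibility~\eqref{eq:adm} of $\Cscr$ regarded as an operator into $\widetilde Y$. The computation is short; the point that requires genuine care---and the main technical obstacle---is justifying the energy balance rigorously for $x\in\dom{A_Q}$, i.e.\ checking that $t\mapsto\Cscr\T_Q(t)x$ is a well-defined, measurable $\widetilde Y$-valued map, so that the integral above is meaningful and the fundamental theorem of calculus applies to $t\mapsto\|\T_Q(t)x\|^2_X$. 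I would settle this using the $\Ascr$-boundedness of $\Cscr$---a standard property of the observation operator in a BCS, already invoked for the wave equation---together with the continuity of $t\mapsto\T_Q(t)x$ into $\dom{A_Q}$ in the graph norm (which coincides with the $\Ascr$-graph norm on $\dom{A_Q}$), whence $t\mapsto\Cscr\T_Q(t)x$ is continuous into $Y\hookrightarrow\widetilde Y$.
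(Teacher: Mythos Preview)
Your proof is correct and follows essentially the same route as the paper: both establish the dissipation inequality $\frac{d}{dt}\|x(t)\|^2_X\leq-2\delta\|\Cscr x(t)\|^2_{\widetilde Y}$ on classical trajectories by combining passivity of the original BCS with the feedback constraint $\Bscr x(t)=-Q\Cscr x(t)$ and the uniform accretivity of $Q$, then integrate over $[0,\tau]$ and drop $\|x(\tau)\|^2_X\geq0$. You are somewhat more explicit than the paper in two places---you write out the one-line verification that the closed-loop BCS is passive, and you address the continuity of $t\mapsto\Cscr\T_Q(t)x$ needed to justify the integral---but these are elaborations of, not departures from, the paper's argument.
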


\begin{proof}
By the definitions of admissible feedback operator and BCS, it follows that $(\Bscr+Q\Cscr,\Ascr,\Cscr)$ is a BCS on $(Y',X,\widetilde Y)$, and by definition the generator of $\T_Q$ is $A_Q:=\Ascr\big|_{\Ker{\Bscr+Q\Cscr}}$. 

For a fixed $ x_0\in\dom{A_Q}$, the associated state trajectory $ x(t)=\T_Q (t)\, x_0$ stays in $\dom{A_Q}$, and by the assumed passivity, for all $t\geq0$ we have
$$
  	\re\Ipdp{\Ascr  x(t)}{ x(t)}_X \leq \re\Ipdp{\Bscr  x(t)}{\Cscr  x(t)}_{Y',Y}
  		= -\re\Ipdp{Q\Cscr  x(t)}{\Cscr  x(t)}_{Y',Y}.
$$
Multiplying this by 2 and integrating over $[0,\tau]$, we get
$$
	\| x(\tau)\|_X^2-\| x(0)\|_X^2 = \int_0^\tau 2\re\Ipdp{A_Q  x(t)}{ x(t)}_X\ud t 
	\leq -2\delta\int_0^\tau \|\Cscr x(t)\|^2_{\widetilde Y}\ud t.
$$
Letting $\tau\to+\infty$, we obtain that $\Cscr$ is infinite-time admissible, since
$$
	\int_0^\infty \|\Cscr \T_Q(t) x_0\|^2_{\widetilde Y}\ud t \leq \frac{1}{2\delta}\| x_0\|_X^2,
		\quad x_0\in\dom{A_Q}.
$$
\end{proof}

We end the section by discussing the wave system as an example for the above abstract definitions. It is clear that the multiplication by $b^2=m\cdot\nu$ in \eqref{eq:particulardamper} is a bounded operator on $L^2(\Gamma_1)$, and hence it is also in $\Lscr(\Wscr,\Wscr')$ and it is  uniformly accretive if \eqref{eq:dampass} holds. Furthermore, multiplication by $b^2$ is an admissible feedback operator for the wave system in \eqref{eq:undampedODE} and for its restriction in Thm \ref{thm:WaveL2}. Indeed, $\ker(\Bfrak\Hscr+b^2\,\Cfrak\Hscr)=\ker(\widetilde\Bfrak+b^2\,\widetilde\Cfrak)\subset\dom{\widetilde\Afrak}$, by \cite[Thm\ 3.5]{KuZw15} the operator $\Afrak\Hscr\big|_{\Ker{\Bfrak\Hscr+b^2\,\Cfrak\Hscr}}=\widetilde\Afrak\big|_{\ker(\widetilde\Bfrak+b^2\,\widetilde\Cfrak)}$ generates a  contraction semigroup on $X_\Hscr$, and the operators
$$
	\Bfrak\Hscr+b^2\Cfrak\Hscr=\bbm{b^2\gamma_0&\gamma_\perp}\Hscr
	\qquad\text{and}\qquad \widetilde\Bfrak+b^2\,\widetilde\Cfrak
$$ 
are continuous and surjective; hence they have right-inverses with the properties required in Definition \ref{def:BCS}.

\section{The plant, the controller, and the exosystem} \label{sec:concontr} 

In the next section, we solve the robust output regulation problem for a general BCS $(\Bscr,\Ascr,\Cscr)$ on the Hilbert spaces $(U,X,Y)$; the system is not necessarily related to the wave equation. In the following we assume that the whole boundary $\partial \Omega$ is accessible via $\Bscr$ and $R_1,R_2$ are arbitrary restrictions to certain parts of $\partial \Omega$. We first add an external disturbance $w$ to the BCS, thus obtaining the plant
\begin{equation}\label{eq:plant}
	\left\{\begin{aligned} 
		\dot x(t)&=\Ascr x(t), \qquad x(0)=x_0,\\
		\Bscr x(t)&=R_1u(t)+R_2w(t),   \qquad t\geq0, \\
		\Cscr x(t)&=y(t),
	\end{aligned}\right.
\end{equation}
where $u$ and $w$ may act on different parts of the boundary depending on $R_1$ and $R_2$.

In what follows, $Q$ is such that $R_1Q$ is an admissible static output feedback operator for \eqref{eq:plant} such that the semigroup $\T_s$ generated by $A_s:=\Ascr\big|_{\dom\Ascr\cap\Ker{\Bscr+R_1Q\Cscr}}$ is exponentially stable and $\Cscr$ is an admissible observation operator for $\T_s$ (here the subscript 's' stands for "stabilized plant'').

We will connect the plant to the dynamic controller
\begin{equation}\label{eq:controller}
	\left\{\begin{aligned}         
		\dot z(t) &= \Gscr_1 z(t)+\Gscr_2(y(t)-y_{ref}(t)), \quad z(0) = z_0 \\
		u(t) &= Kz(t)-Q(y(t)-y_{ref}(t)),\qquad t\geq0,
	\end{aligned}\right.
\end{equation}
where $y_{ref}$ is an external reference signal and the state space $Z$ of the controller is a Hilbert space, but $\Gscr_1\in\Lscr(Z)$ is bounded. Moreover, we assume that $\Gscr_2\in\Lscr(Y,Z)$, $K\in\Lscr(Z,U)$ and $Q\in\Lscr(Y,U)$. The disturbance signal $w$ and the reference signal $y_{ref}$ are assumed to be generated by an exosystem
\begin{equation}\label{eq:exo}
	\left\{\begin{aligned} 
	\dot v(t)&=S v(t), \quad v(0)=v_0, \\
	 w(t)&=Ev(t), \qquad t\geq0,  \\
	y_{ref}(t)&=-Fv(t), 
\end{aligned}\right.  
\end{equation}
which is a linear system on a finite-dimensional space $W = \mathbb{C}^q$, $q \in \mathbb{N}$. We assume that $S = \operatorname{diag}(i\omega_1, i\omega_2,\ldots, i\omega_q)$ with $\omega_i \neq \omega_j$ for $i \neq j$, $E \in \Lscr (W, U)$ and $F \in \Lscr (W, Y)$.

Setting $u$ and $y$ equal in \eqref{eq:plant} and \eqref{eq:controller}, and using \eqref{eq:exo}, we obtain 
\begin{equation}\label{eq:closed-loop}
	\left\{\begin{aligned} 
		\ddt\bbm{x\\z}&=\bbm{\Ascr&0\\\Gscr_2\Cscr&\Gscr_1}\bbm{x\\z}
			+\bbm{0\\\Gscr_2F}v, \\
		(R_2E - R_1QF)v &=\bbm{\Bscr+R_1Q\Cscr & -R_1K}\bbm{x\\z} ,\\
		e &=\bbm{\Cscr&0} \bbm{x\\z} + Fv,
	\end{aligned}\right.
\end{equation}
where we chose the regulation error $e(t) =: y(t) - y_{ref}(t)$ as the output and the state-space is $X_e := X \times Z$. This system is no longer a BCS and we now proceed to write it in the standard input/state/output form. First we observe that we may interpret the feedthrough $Q$ of the controller as a part of the plant without changing \eqref{eq:closed-loop}. This amounts to pre-stabilizing the plant via replacing the input equation of \eqref{eq:plant} by $(\Bscr+R_1Q\Cscr)x(t) =R_1u(t)+(R_2E - R_1QF)v(t)$ and simultaneously removing the term $-Q(y(t)-y_{ref}(t))$ from the output equation of \eqref{eq:controller}.

As $R_1Q$ is assumed to be an admissible feedback operator, the pre-stabilized plant $(\Bscr+R_1Q\Cscr,\Ascr,\Cscr)$ is a BCS and by Def.\ \ref{def:BCS}.2, we can choose a right inverse $B_s\in\Lscr(U,X)$ of $\Bscr +R_1Q\Cscr$ such that 
\begin{equation} \label{eq:Bprop}
 B_sR_1U\subset\dom\Ascr, \; \Ascr B_sR_1\in\Lscr(U,X), \; \Cscr B_sR_1\in\Lscr(U,Y). 
 \end{equation}
In order to present the transfer function of $(\Bscr+R_1Q\Cscr,\Ascr,\Cscr)$, consider the auxiliary function
$$
P_0(\lambda) := \Cscr(\lambda-A_s)^{-1}(\Ascr B_s-\lambda B_s)+\Cscr B_s,\quad
	\lambda\in\res{A_s}.
$$
Now, define the transfer function by
\begin{equation} \label{eq:transf}
	P_s(\lambda) :=P_0(\lambda)R_1, \qquad \lambda\in\res{A_s}.
\end{equation}
The auxiliary function $P_0$ becomes useful later on in describing the mapping from $v$ to $y$.

Now let $\sbm{x\\z}$ be a classical state trajectory of \eqref{eq:closed-loop}, i.e., $\sbm{x\\z}\in C^1(\rplus;X_e)$, $\Gscr_2 y_{ref}\in C(\rplus;Z)$, $(\Bscr+R_1Q\Cscr)x\in C(\rplus;U)$, $w\in C^1(\rplus;U)$, and the first two lines of \eqref{eq:closed-loop} hold for all $t\geq0$. Next introduce a new state variable for \eqref{eq:closed-loop} by
$$
	x_e:=\bbm{1&-B_sR_1 K\\0&1}\bbm{x\\z}-\bbm{B_sE_s v\\0} \in C^1(\rplus;X_e),
$$
where we denote $E_s := R_2E-R_1QF$ for brevity. This transformation can be inverted as 
\begin{equation}\label{eq:statetransfinv}
	\bbm{x\\z}:=\bbm{1&B_s R_1K\\0&1}x_e+\bbm{B_sE_sv\\0}.
\end{equation}
Differentiating $x_e$ and using the first line of \eqref{eq:closed-loop}, we get
$$
\begin{aligned}
	\dot x_e &= \bbm{\Ascr-B_sR_1K\Gscr_2\Cscr&\Ascr B_sR_1K-B_sR_1K\tilde \Gscr_1  \\ 
		\Gscr_2\Cscr & \tilde\Gscr_1} x_e \\
	&\qquad + \bbm{\Ascr B_s E_s - B_sE_sS - B_sR_1K\Gscr_2(\Cscr B_sE_s + F) \\ \Gscr_2(\Cscr B_sE_s + F)}v,
\end{aligned}
$$
where we denote $\tilde\Gscr_1 := \Gscr_1+\Gscr_2\Cscr B_sR_1K$ for brevity.

With the new state variable, the input equation of \eqref{eq:closed-loop} becomes
$$	
	E_s v=\bbm{\Bscr+R_1Q\Cscr&-K}\left(x_e+\bbm{B_s\\0}(R_1Kz+E_sv)\right)
$$
which simplifies to $x_e\in\Ker{\bbm{\Bscr+R_1Q\Cscr&0}}$. Hence recalling that $A_s=\Ascr\big|_{\dom\Ascr\cap\Ker{\Bscr+R_1Q\Cscr}}$ and defining
\begin{equation}\label{eq:AeDef}
\begin{split}
	A_e &:= \bbm{A_s-B_sR_1K\Gscr_2\Cscr&\Ascr B_sR_1K-B_sR_1K\tilde\Gscr_1 \\ 
		\Gscr_2\Cscr & \tilde\Gscr_1}\bigg|_{\dom{A_e}}, \\
	\dom{A_e} &:= \Ker{\Bscr+R_1Q\Cscr}\times Z,
\end{split}
\end{equation}
we get that every classical solution of \eqref{eq:closed-loop} satisfies $x_e(t)\in\dom{A_e}$ for all $t\geq0$ and $\dot{ x}_e = A_e  x_e + B_ev$,
where the control  operator $B_e\in\Lscr(W, X_e)$ is
$$
	B_e:=\bbm{\Ascr B_s E_s - B_sE_sS - B_sR_1K\Gscr_2(\Cscr B_sE_s + F) \\ \Gscr_2(\Cscr B_sE_s + F)}.
$$
Finally, using \eqref{eq:statetransfinv} the output for \eqref{eq:closed-loop} becomes
$$	
	e = \bbm{\Cscr & \Cscr B_sR_1K} x_e + (\Cscr B_sE_s+F)v.
$$
Thus, the closed-loop system  is of the form
\begin{equation}\label{eq:clloopsys} 
	\left\{ \begin{aligned}
		\dot x_e & = A_e x_e + B_e v, \\ 
		e & = C_e x_e + D_e v,
	\end{aligned}\right.
\end{equation}
where
$$
\begin{aligned}
	C_e &:= \bbm{\Cscr & \Cscr B_sR_1K},\qquad\dom{C_e}:=\bbm{\dom\Cscr\\Z},
		\qquad\text{and} \\
	D_e &:= \Cscr B_sE_s+F\in\Lscr(W, Y).
\end{aligned}
$$
We denote the transfer function of \eqref{eq:clloopsys} from $v$ to $e$ with 
$$
P_e(\lambda) = C_e(\lambda-A_e)^{-1}B_e + D_e.
$$

The above calculations show that every classical solution of \eqref{eq:closed-loop} with $v\in C(\rplus;W)$ is also a classical solution of \eqref{eq:clloopsys}. Conversely, assume that $x_e\in C^1(\rplus;X_e)$ with $x_e(t)\in\dom{A_e}$, $v\in C(\rplus;W)$ and \eqref{eq:clloopsys} holds on $\rplus$. Then $v$, $\sbm{x\\z}$ in \eqref{eq:statetransfinv} and $e$ satisfy \eqref{eq:closed-loop}. We conclude that \eqref{eq:closed-loop} and \eqref{eq:clloopsys} are equivalent systems in the sense that they have the same classical solutions. 

The following result forms the basis for the output regulation theory in the next section. Note that we do not assume that the original plant \eqref{eq:plant} is well-posed or regular, but the closed-loop system \eqref{eq:clloopsys} nevertheless has these properties. 

\begin{theorem}\label{thm:CLsemigroup}
The operator $A_e$ in \eqref{eq:AeDef} generates a $C_0$-semigroup $\T_e$ on $X_e$ and $C_e$ is an admissible observation operator for $\T_e$.  The closed-loop system \eqref{eq:clloopsys} is well-posed and regular such that $P_e(\lambda) \to D_e$ as $\operatorname{Re}\lambda \to \infty$.
\end{theorem}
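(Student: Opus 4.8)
The plan is to treat $A_e$ as a perturbation of the block-diagonal operator $A_0:=\sbm{A_s&0\\0&\tilde\Gscr_1}$ with $\dom{A_0}=\dom{A_s}\times Z$, exploiting that the only \emph{unbounded} ingredient in the perturbation is the admissible observation $\Cscr$. First I would use \eqref{eq:Bprop} together with the standing boundedness of $\Gscr_1,\Gscr_2,K,Q$ to record that $\tilde\Gscr_1=\Gscr_1+\Gscr_2\Cscr B_sR_1K\in\Lscr(Z)$ and $\Ascr B_sR_1K-B_sR_1K\tilde\Gscr_1\in\Lscr(Z,X)$, so that $A_e-A_0$ splits into a bounded part $P_b:=\sbm{0&\Ascr B_sR_1K-B_sR_1K\tilde\Gscr_1\\0&0}$ and a genuinely unbounded part $\sbm{-B_sR_1K\Gscr_2\\\Gscr_2}\,\widehat\Cscr$, where $\widehat\Cscr\sbm{x\\z}:=\Cscr x$ on $\dom\Cscr\times Z$ and the left factor lies in $\Lscr(Y,X_e)$. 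Since $A_s$ generates $\T_s$ and $\tilde\Gscr_1$ is bounded, $A_0$ generates the $C_0$-semigroup that is the direct sum of $\T_s$ and $(e^{t\tilde\Gscr_1})_{t\ge0}$, and $\widehat\Cscr$ inherits admissibility from that of $\Cscr$ for $\T_s$ (the $Z$-component being irrelevant).

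Next I would add the two perturbations in turn. Adding the bounded $P_b$ preserves the generator property, and since admissibility of an observation operator is stable under bounded perturbations of the generator, $\widehat\Cscr$ remains admissible for the semigroup $\T_1$ generated by $A_0+P_b$. For the remaining term $\sbm{-B_sR_1K\Gscr_2\\\Gscr_2}\widehat\Cscr$, whose left factor is bounded and whose right factor is admissible, the Cauchy--Schwarz inequality converts the $L^2$ admissibility estimate into the $L^1$ bound $\int_0^\tau\bignorm{\sbm{-B_sR_1K\Gscr_2\\\Gscr_2}\widehat\Cscr\,\T_1(t)x_0}\ud t\le c\,\sqrt\tau\,K_\tau\,\norm{x_0}$, whose constant $c\sqrt\tau K_\tau$ is $<1$ for $\tau$ small enough. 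The Miyadera--Voigt perturbation theorem then shows that $A_e$ generates a $C_0$-semigroup $\T_e$ on $X_e$, without enlarging the domain, so that $\dom{A_e}=\dom{A_s}\times Z$ as claimed.

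The main obstacle is to show that $\widehat\Cscr$, and hence $C_e=\sbm{\Cscr&\Cscr B_sR_1K}=\widehat\Cscr+\sbm{0&\Cscr B_sR_1K}$ with its second block bounded, stays admissible for the \emph{perturbed} semigroup $\T_e$; ordinary bounded-perturbation results do not apply here because the perturbation itself contains $\widehat\Cscr$. I would resolve this through the variation-of-constants identity $\T_e(t)x_0=\T_1(t)x_0+\int_0^t\T_1(t-s)\sbm{-B_sR_1K\Gscr_2\\\Gscr_2}\widehat\Cscr\,\T_e(s)x_0\ud s$ for $x_0\in\dom{A_e}$: applying $\widehat\Cscr$ and setting $y(t):=\widehat\Cscr\T_e(t)x_0$ gives the Volterra equation $y=y_0+\mathcal V y$ on an exponentially weighted $L^2$ space, where $y_0(t)=\widehat\Cscr\T_1(t)x_0$ obeys the admissibility bound from the previous step and $\mathcal V$ is the input/output map of $\bigl(A_0+P_b,\ \sbm{-B_sR_1K\Gscr_2\\\Gscr_2},\ \widehat\Cscr\bigr)$. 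That map is bounded, since a bounded control operator together with an admissible observation operator always yields a bounded input/output map (equivalently, by duality, an admissible control operator with a bounded observation operator does). Choosing the exponential weight large enough that $\norm{\mathcal V}<1$ --- which is possible because $\norm{\widehat\Cscr(\lambda-A_0-P_b)^{-1}}$ decays on right half-planes --- lets me invert $I-\mathcal V$ and conclude $\int_0^\tau\norm{\widehat\Cscr\T_e(t)x_0}^2\ud t\le C\norm{x_0}^2$, i.e.\ admissibility of $C_e$ for $\T_e$.

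Finally, well-posedness and regularity would follow quickly. As $B_e\in\Lscr(W,X_e)$ is bounded it is trivially an admissible control operator, and together with the admissible $C_e$ and the bounded $D_e$ this makes \eqref{eq:clloopsys} a well-posed linear system. For regularity I would invoke the resolvent estimate $\norm{C_e(\lambda-A_e)^{-1}}\le M/\sqrt{\re\lambda-\omega}$ that holds for any admissible observation operator: since $B_e$ is bounded, $\norm{C_e(\lambda-A_e)^{-1}B_e}\le M\norm{B_e}/\sqrt{\re\lambda-\omega}\to0$ as $\re\lambda\to\infty$, whence $P_e(\lambda)\to D_e$ and the system is regular with feedthrough $D_e$.
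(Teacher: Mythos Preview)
Your argument is correct, but it differs from the paper's in how the perturbation is handled. The paper splits $A_e=A_1+A_2+A_3$ with $A_1=\sbm{A_s&0\\0&\Gscr_1}$ (not $\tilde\Gscr_1$), $A_2=\sbm{-B_sR_1K\Gscr_2\\\Gscr_2}\bbm{\Cscr&0}$, and $A_3$ bounded, then adds the \emph{unbounded} perturbation $A_2$ first and the bounded $A_3$ last. More importantly, for the step ``admissible observation times bounded operator'' the paper simply invokes \cite[Thm~5.4.2]{TuWeBook}, which in one stroke gives both that $A_1+A_2$ generates a $C_0$-semigroup \emph{and} that $\bbm{\Cscr&0}$ remains admissible for it. Your route --- Miyadera--Voigt for generation, then a separate Volterra-equation argument on an exponentially weighted $L^2$ space for admissibility persistence --- is essentially a hands-on reproof of that theorem. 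It is more self-contained and makes the mechanism transparent, but the paper's citation is considerably shorter. Likewise, for well-posedness and regularity the paper just quotes \cite[Thm~4.3.7]{TuWeBook}, whereas you spell out the resolvent decay $\norm{C_e(\lambda-A_e)^{-1}}\le M/\sqrt{\re\lambda-\omega}$ explicitly; both reach the same conclusion.
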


\begin{proof}
We begin by splitting $A_e=A_1+A_2+A_3$, where
$$
\begin{aligned}
	A_1 &= \bbm{ A_s & 0 \\ 0 & \Gscr_1 }, \qquad \dom{A_1}=\dom{ A_e}, \\
	A_2 &= \bbm{ -B_sR_1K\Gscr_2\Cscr&0 \\ \Gscr_2\Cscr & 0}, \qquad \dom{A_2}=\dom{ A_e}, \\
	A_3 &= \bbm{ 0&\Ascr B_sR_1K-B_sR_1K(\Gscr_1+\Gscr_2\Cscr B_sR_1K) \\ 
		0 & \Gscr_2\Cscr B_sR_1K}, \\ & \qquad \dom{A_3}=X_e.
\end{aligned}
$$
Here $A_1$ generates a $C_0$-semigroup $\T_1$ on $X_e$. The operator $A_2$ can be factored as
$$
	A_2 = \bbm{-B_sR_1K\Gscr_2\\\Gscr_2}\bbm{\Cscr&0},
$$
where the first factor is bounded from $Y$ into $X_e$. Our assumption that $\Cscr$ is admissible for $\T_s$  implies that $\bbm{\Cscr&0}:X_e\supset\dom{A_e}\to Y$ is an admissible observation operator for $\T_1$, and by \cite[Thm 5.4.2]{TuWeBook}, $A_1+A_2$ generates a $C_0$-semigroup $\T_2$ on $X_e$ and $\bbm{\Cscr&0}$ is admissible for $\T_2$. Since $A_3$ is bounded, $A_e$ generates a $C_0$-semigroup by \cite[Thm 5.4.2]{TuWeBook} and due to the boundedness of $\Cscr B_sR_1K$, $C_e$ is admissible for $\T_e$. As in addition $B_e$ and $D_e$ are bounded, the well-posedness and regularity of the closed-loop system follow immediately from \cite[Thm\ 4.3.7]{TuWeBook}
\end{proof}

\section{Output regulation}\label{sec:ROR}

We begin this section by presenting the three output regulation problems considered in this paper. The structure for the remainder of this section will be presented after the problem definitions.

\medskip

\noindent {\bf The Output Regulation Problem.} For a given plant \eqref{eq:plant}, choose the controller $(\Gscr_1, \Gscr_2, K, Q)$ in \eqref{eq:controller} in such a way that the following are satisfied:
\begin{enumerate}
	\item The closed-loop system generated by $A_e$ is exponentially stable.
	\item For all initial states $ x_{e0} \in X_e$ and $v_0 \in W$ the regulation error satisfies $e^{\alpha \cdot} e(\cdot) \in L^2([0,\infty); Y)$ for some $\alpha > 0$ independent of $ x_{e0} \in X_e$ and $v_0 \in W$.
\end{enumerate}
Furthermore, if the controller solves the output regulation problem despite perturbations in the parameters of the plant or the exosystem, then we say that the controller solves the \emph{robust} output regulation problem with respect to this class of perturbations. To make this precise, we first define the class of admissible perturbations:

\begin{definition}
A quintuple $(\Ascr', \Bscr', \Cscr', E', F')$ of linear operators belongs to the \emph{class $\Oscr$ of admissible perturbations} if it has the following properties: 
\begin{enumerate}
\item The triple $(\Bscr'+R_1Q\Cscr',\Ascr',\Cscr')$ is a BCS on $(U,X,Y)$.
\item The observation operator $\Cscr'$ is admissible for the semigroup generated by $A_s':=\Ascr'\big|_{\Ker{\Bscr'+R_1Q\Cscr'}}$.
\item The eigenvalues of $S$ are in the resolvent set of the perturbed pre-stabilized plant, i.e., $\{i\omega_k\}_{k=1}^q \subset \rho(A_s')$.
\item $E' \in \Lscr (W, U)$ and $F' \in \Lscr(W,Y)$.
\end{enumerate}
\label{ass:standing}
\end{definition}

In the above definition it would appear that the class $\Oscr$ of perturbations depends on $Q$. However, as $Q$ only contributes to stabilizing the plant, we have much more freedom choosing $Q$ than choosing the other controller parameters (as seen later on). For example, in the wave equation considered in Section \ref{sec:firstord}, any uniformly accretive operator can be chosen as $Q$. Therefore, in Definition \ref{ass:standing}, one could think of $Q$ being chosen such that the class $\Oscr$ is as large as possible. Moreover, if $(\Ascr', \Bscr', \Cscr', E', F')\in\Oscr$ then the transfer function \eqref{eq:transf} of the triple $(\Bscr'+R_1Q\Cscr',\Ascr',\Cscr')$ is well-defined and bounded at the frequencies of the exosystem.

We make the natural assumption that the unperturbed system is in class $\Oscr$ as well, that is, $(\Ascr, \Bscr, \Cscr, E, F) \in \Oscr$. Note that this does not include the assumption that the semigroup generated by $A_s$ is exponentially stable. Further note that even though $(\Bscr, \Ascr, \Cscr)$ is assumed to be a BCS, that is not required from $(\Bscr', \Ascr, \Cscr')$ but only from $(\Bscr' + R_1Q\Cscr', \Ascr', \Cscr')$.

From Definition \ref{ass:standing} it follows that the perturbed closed-loop system is well-posed and regular. Please note that while no perturbations are allowed in the eigenvalues of the generator $S$ of the exosystem or in the controller parameter $\Gscr_1$, the parameters $(\Gscr_2, K, Q)$ would in fact allow certain bounded perturbations. We will comment on this more thoroughly in Remark \ref{rem:contr_per}.

\medskip

\noindent {\bf The Robust Output Regulation Problem.} For a given plant, choose the controller $(\Gscr_1, \Gscr_2, K, Q)$ in such a way that the following are satisfied:
\begin{enumerate}
	\item The controller $(\Gscr_1, \Gscr_2, K, Q)$ solves the output regulation problem.
	\item If the operators $(\Ascr, \Bscr, \Cscr, E, F)$ are perturbed to $(\Ascr', \Bscr', \Cscr', E', F') \in \Oscr$ in such a way that the closed-loop system remains exponentially stable, then for all initial states $ x_{e0} \in X_e$ and $v_0 \in W$ the regulation error satisfies $e^{\alpha'\cdot} e(\cdot) \in L^2([0,\infty); Y)$ for some $\alpha' > 0$ independent of $ x_{e0} \in X_e$ and $v_0 \in W$.
\end{enumerate}

In Section \ref{ROR:arrc}, we will construct a controller that solves the robust output regulation problem \emph{approximately}. That is, the regulation error does not decay asymptotically to zero but can be made small. For this purpose, we introduce the following new control problem:

\medskip

\noindent {\bf The Approximate Robust Output Regulation Problem.} Let $\delta > 0$ be given. Choose the controller $(\Gscr_1, \Gscr_2, K, Q)$ in such a way that the following are satisfied:
\begin{enumerate}
\item The closed-loop system generated by $A_e$ is exponentially stable.
\item For all initial states $ x_{e0} \in X_e$ and $v_0 \in W$ the regulation error satisfies 
$$
	\int_t^{t+1 }\|e(s)\|^2\ud s \leq Me^{-\alpha t}(\|x_{e0}\|^2 + \|v_0\|^2) + \delta\|v_0\|^2
$$ 
for some $M,\alpha > 0$ independent of $x_{e0} \in X_e, v_0 \in W.$
\item If the operators $(\Ascr, \Bscr, \Cscr, E, F)$ are perturbed to $(\Ascr', \Bscr', \Cscr', E', F') \in \Oscr$ in such a way that the closed-loop system remains exponentially stable, then there exists a ¡$\delta' > 0$ such that for all initial states $ x_{e0} \in X_e$ and $v_0 \in W$ the regulation error satisfies 
$$
	\int_t^{t+1 }\|e(s)\|^2\ud s \leq M'e^{-\alpha' t}(\|x_{e0}\|^2 + \|v_0\|^2) + \delta'\|v_0\|^2
$$ 
for some $M',\alpha' > 0$ independent of $x_{e0}, v_0$.
\end{enumerate}

\begin{remark} \label{rem:arorp}
The approximate robust output regulation problem formulation implies that, in the absence of perturbations, the asymptotic regulation error must be smaller than $\delta\|v_0\|^2$ for any given (or in practice chosen) $\delta > 0$. However, when perturbations are present, the asymptotic regulation error is merely bounded by $\delta'\|v_0\|^2$. For details, see Theorem \ref{thm:approxcontrol}, \eqref{eq:appr_error}--\eqref{eq:delta} and the discussion therein.
\end{remark}

Now that we have presented the different output regulation problems to be considered, the structure of the remaining section is as follows. Before proceeding to constructing the controllers, we will present two auxiliary results to be used throughout the remainder of this section. In \S\ref{ROR:nrrc} we present a regulating controller without the robustness requirement, in \S\ref{ROR:ROR} we present the internal model principle for boundary control systems, in \S\ref{ROR:arrc} we present an approximate robust controller, and finally in \S\ref{ROR:rrc} we present a precise robust controller.

The following auxiliary result is a consequence of \cite[Thm 4.1]{PauPoh14} under the assumption that the closed-loop system \eqref{eq:clloopsys} is a regular linear system. The result states that the solvability of the {\itshape regulator equations}
\begin{subequations}
\begin{align}
	\Sigma S & = A_e\Sigma + B_e \label{eq:regeq1} \\
	0 & = C_e\Sigma + D_e \label{eq:regeq2}
\end{align}
\label{eq:regeq}%
\end{subequations}
is equivalent to the solvability of the output regulation problem. The result of \cite[Thm 4.1]{PauPoh14} essentially follows from \cite[Lem. 4.3]{PauPoh14} by which the regulation error can be written as 
$$
	e(t) = C_e\mathbb{T}_e(t)(x_{e0} - \Sigma v_0) + (C_e\Sigma + D_e)v(t),
$$
where the first part decays to zero at an exponential rate provided that $\mathbb{T}_e$ is exponentially stable, $C_e$ is an admissible observation operator for $\mathbb{T}_e$ and $\Sigma$ is the solution of \eqref{eq:regeq1}.

\begin{theorem}
Assume that the closed-loop system is regular and exponentially stabilized by a controller $(\Gscr_1, \Gscr_2, K, Q)$. Then the controller solves the output regulation problem if and only if the regulator equations \eqref{eq:regeq} have a solution $\Sigma \in \Lscr(W,X_e)$. The solution $\Sigma$ is unique when it exists.
\label{thm:PauPoh14_4.1}
\begin{proof}
We first note that the feedthrough term $-Qe(t)$ in the controller is not part of the controller in \cite[Thm 4.1]{PauPoh14}. However, as in \eqref{eq:closed-loop} we can interpret the feedthrough $Q$ as a part of the plant \eqref{eq:plant} and simultaneously remove it from the controller \eqref{eq:controller}, so that the input equation becomes $(\Bscr + R_1Q\Cscr)x(t) = R_1u(t) + R_1Qy_{ref}(t) + R_2w(t)$. The closed-loop system is unaffected by this algebraic trick, and hence, we may continue with a pre-stabilized plant and the same controller structure as in \cite[Thm 4.1]{PauPoh14}.

Now the result follows from \cite[Thm 4.1]{PauPoh14} as an exponentially stable semigroup is also strongly stable, and for $A_e$ being the generator of an exponentially stable semigroup and $\sigma(S) \subset i\mathbb{R}$ the Sylvester equation $\Sigma S =  A_e \Sigma +  B_e$ always has a unique solution $\Sigma \in \Lscr(W,X_e)$ by \cite[Cor. 8]{Pho91}. Furthermore, the exponential decay of the regulation error follows from the assumed exponential stability of the closed-loop system.
\end{proof}
\end{theorem}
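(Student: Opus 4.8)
The plan is to deduce both directions from \cite[Thm 4.1]{PauPoh14}, whose hypotheses are met because Theorem \ref{thm:CLsemigroup} guarantees that the closed-loop system \eqref{eq:clloopsys} is a regular linear system with exponentially stable semigroup $\T_e$ and admissible observation operator $C_e$. The one structural discrepancy is the feedthrough term $-Q(y-y_{ref})$ in the controller \eqref{eq:controller}, which is absent from the controller of \cite{PauPoh14}. First I would remove this discrepancy exactly as in the derivation of \eqref{eq:closed-loop}: absorb $Q$ into the plant by replacing the input equation with $(\Bscr+R_1Q\Cscr)x=R_1u+R_1Qy_{ref}+R_2w$ and simultaneously dropping $-Q(y-y_{ref})$ from the output equation. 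This algebraic substitution leaves the closed-loop operators $(A_e,B_e,C_e,D_e)$ unchanged, so the pre-stabilized plant together with the reduced controller fits the framework of \cite[Thm 4.1]{PauPoh14} verbatim.

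With the frameworks reconciled, the equivalence is read off from the regulation-error formula $e(t)=C_e\T_e(t)(x_{e0}-\Sigma v_0)+(C_e\Sigma+D_e)v(t)$ supplied by \cite[Lem. 4.3]{PauPoh14}, valid once $\Sigma$ solves \eqref{eq:regeq1}. For sufficiency, if in addition \eqref{eq:regeq2} holds, then $C_e\Sigma+D_e=0$ annihilates the persistent term, leaving $e(t)=C_e\T_e(t)(x_{e0}-\Sigma v_0)$, which by admissibility of $C_e$ and exponential stability of $\T_e$ satisfies $e^{\alpha\cdot}e(\cdot)\in L^2([0,\infty);Y)$ for some $\alpha>0$ independent of the initial data. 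For necessity I would argue contrapositively: the signal $v(t)=e^{St}v_0$ is a superposition of the undamped modes $e^{i\omega_k t}$ with distinct $\omega_k$, so $(C_e\Sigma+D_e)v(\cdot)$ cannot lie in any exponentially weighted $L^2$ space unless $C_e\Sigma+D_e=0$; since the first term always decays exponentially, the requirement $e^{\alpha\cdot}e(\cdot)\in L^2$ forces \eqref{eq:regeq2}.

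It remains to settle existence and uniqueness of $\Sigma$. The first regulator equation \eqref{eq:regeq1} is a Sylvester equation $\Sigma S=A_e\Sigma+B_e$. Because $\T_e$ is exponentially stable the spectrum of $A_e$ lies strictly in the open left half-plane, whereas $\sigma(S)=\{i\omega_k\}_{k=1}^q\subset i\R$; the two spectra are therefore disjoint, and \cite[Cor. 8]{Pho91} yields a unique $\Sigma\in\Lscr(W,X_e)$ solving \eqref{eq:regeq1}. Uniqueness of the full regulator-equation solution follows immediately, since \eqref{eq:regeq2} is merely an algebraic constraint imposed on this already-unique $\Sigma$.

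The main obstacle is not any single computation but the reconciliation step of the first paragraph: one must check carefully that transferring $Q$ from the controller into the plant alters neither the closed-loop generator, control, observation, nor feedthrough operator, so that \cite[Thm 4.1]{PauPoh14} applies unchanged to the pre-stabilized system. Once that bookkeeping is secured, the regulation-error formula carries all the analytic content and the remainder reduces to the spectral-disjointness argument for the Sylvester equation.
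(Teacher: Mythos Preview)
Your proposal is correct and follows essentially the same route as the paper: absorb $Q$ into the plant so that the controller structure matches that of \cite{PauPoh14}, then invoke \cite[Thm 4.1]{PauPoh14} together with \cite[Cor.~8]{Pho91} for existence and uniqueness of the Sylvester solution. The paper itself notes in the paragraph preceding the theorem that \cite[Thm 4.1]{PauPoh14} derives from \cite[Lem.~4.3]{PauPoh14}, so your unpacking of the error formula and the sufficiency/necessity argument is just a slightly more explicit version of what the paper cites as a black box. One minor remark: you attribute exponential stability of $\T_e$ to Theorem~\ref{thm:CLsemigroup}, but that theorem only gives regularity and admissibility of $C_e$; exponential stability is a hypothesis of the present theorem, not a conclusion of the earlier one.
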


Theorem \ref{thm:PauPoh14_4.1} assumes that the controller exponentially stabilizes the closed-loop system. We will therefore need to show that the controllers we present in Proposition \ref{thm:nrrc}, Theorem \ref{thm:approxcontrol} and Corollary \ref{cor:robregcontr} have this property. For this, we present the following tool which uses the notation of \S\ref{sec:concontr}. Here we need to assume that there exists an operator $Q$ as described in the following:

\begin{lemma}
Let $Z = Y_N^q$, where $Y_N$ is equal to $\C$ or a closed subspace of $Y$. Choose the controller parameter $Q\in\Lscr(Y,U)$ such that the semigroup $\T_s$ generated by $A_s$ is exponentially stable and $\Cscr$ is an admissible observation operator for $\T_s$. Choose the remaining parameters as 
$$ 
\begin{aligned}
	\Gscr_1& = \operatorname{diag}\left(i\omega_1I,i\omega_2I, \ldots, i\omega_qI\right) \in \Lscr(Z), \\
	K & = \epsilon K_0  = \epsilon [K_0^1, K_0^2, \ldots, K_0^q] \in \Lscr(Z,U), \\
	\Gscr_2 & = (\Gscr_2^kP_N)_{k=1}^q \in \Lscr(Y,Z),
\end{aligned}
$$
where $I$ is the identity in $Y_N$, and $P_N$ is a projection onto $Y_N$ in $Y$ if $Y_N\subset Y$ or the identity on $Y$ otherwise. Additionally, assume that $\Gscr_2^k$ and $K_0^k$ satisfy $\sigma(\Gscr_2^kP_NP_s(i\omega_k)K_0^k) \subset \mathbb{C}_-$ for all $k \in \{1,2,\ldots,q\}$. 

Then there exits an $\epsilon^* > 0$ such that the closed-loop system \eqref{eq:clloopsys} is exponentially stable for all $0 < \epsilon < \epsilon^*$.
\begin{proof}
Define the operator $H = (H_1,H_2,\ldots,H_q) \in \Lscr(Z,X)$ by choosing
$$
	H_k := (i\omega_k - A_s)^{-1}(\Ascr B_s - i\omega_k B_s)R_1K_0^k 
$$
for all $k \in \{1,2,\ldots,q\}$. By the choice of $H_k$ we have $(i\omega_k - A_s)H_k = \Ascr B_s R_1K_0^k - i\omega_k B_sR_1 K_0^k$, i.e., $H_k i\omega_k = A_s H_k + \Ascr B_s R_1K_0^k - B_sR_1 K_0^k i\omega_k$, and thus, $H\Gscr_1 = A_s H + \Ascr B_sR_1 K_0 - B_s R_1K_0\Gscr_1$ due to the diagonal structure of $\Gscr_1$. Define
$$
	R = \left[ \begin{array}{rr} -1 & \epsilon H \\ 0 & 1 
\end{array} \right] = R^{-1} \in \Lscr(X_e)
$$
and denote $\hat{A}_e = RA_e R^{-1}$. Note that as $\Rscr(H) \subset \Nscr(\Bscr + R_1Q\Cscr)$, it follows that $\Dscr(\hat{A}_e) = \Dscr(A_e)$. Using the above identity we can write $\hat{A}_e$ as
$$
\hat{A}_e =  \bbm{A_s  - \epsilon \tilde H\Gscr_2 \Cscr & 0 \\
	-\Gscr_2 \Cscr & \Gscr_1 + \epsilon \Gscr_2 \Cscr\tilde H} + \epsilon^2 \bbm{
	0 & \tilde H\Gscr_2 \Cscr\tilde H \\ 0 & 0}.
$$
where we denote $\tilde H := H + B_sR_1K_0$ for brevity.

In the remaining part of the proof we apply the Gearhart-Pr\"uss-Greiner Theorem in \cite[Thm V.1.11]{EnNaBook}. More precisely, we will show that the resolvent of $\hat A_e$ is uniformly bounded on the closed right-half plane. We first note that since $\Cscr$ is admissible for $\T_s$ which is exponentially stable, we have by \cite[Thm 4.3.7]{TuWeBook} that $\Cscr(\lambda - A_s)^{-1}$ is uniformly bounded for all $\lambda\in \overline\C_+$. Thus, as $\tilde H\Gscr_2$ is bounded, there exists an $M_0 > 0$ such that $\|\tilde H\Gscr_2\Cscr(\lambda - A_s)^{-1}\| \leq M_0$, and for $0 < \epsilon < M_0^{-1}$ a Neumann series expansion implies that $1 + \epsilon\tilde H\Gscr_2\Cscr(\lambda - A_s)^{-1}$ is invertible. Thus, we obtain that
$$
(\lambda - A_s + \epsilon\tilde H\Gscr_2\Cscr)^{-1} = (\lambda - A_s)^{-1}(1 + \epsilon\tilde H\Gscr_2\Cscr(\lambda - A_s)^{-1})^{-1}
$$
is uniformly bounded in the right half plane. Hence, the semigroup generated by $A_s - \epsilon\tilde H\Gscr_2\Cscr$ is exponentially stable by \cite[Thm V.1.11]{EnNaBook}.

Note that by the choice of $H_k$ we have
$$
\begin{aligned}
\Cscr(H_k + B_sR_1 K_0^k) & = \Cscr (i\omega_k - A_s)^{-1}(\Ascr B_s - i\omega_k B_s)R_1K_0^k + \Cscr B_s R_1K_0^k \\ & = P_s(i\omega_k)K_0^k,
\end{aligned}
$$
and thus  $\sigma(\Gscr_2^kP_N\Cscr(H_k + B_sR_1K_0^k)) \subset \mathbb{C}_-$ by the assumption made on $\Gscr_2^k$ and $K_0^k$. Furthermore, since $\sigma(\Gscr_1) = \{i\omega_k\}_{k=1}^q$, the operator $\Gscr_1 + \epsilon \Gscr_2\Cscr\tilde H$ satisfies the stability conditions of the operator $A_c - \epsilon\tilde PK$ in \cite[Appendix B]{HamPoh11}. Hence, by \cite[Appendix B]{HamPoh11} there exist constants $M_1, \beta > 0$ such that for all $\epsilon > 0$ sufficiently small we have $\|\mathbb{T}_{2}(t)\| \leq M_1 e^{-\epsilon\beta t}$ for $t \geq 0$, where $\mathbb{T}_2$ is the semigroup generated by $\Gscr_1 + \epsilon \Gscr_2\Cscr\tilde H$. This further implies that
$$
	\|(\lambda - \Gscr_1 + \epsilon\Gscr_2\Cscr\tilde H)^{-1}\| \leq \frac{M_1}{\epsilon\beta},\qquad
		\lambda \in \overline\C_+. 
$$

Consider the operator $\hat A_e$ in the form $A_1 + \epsilon^2A_2$. Since we have shown that the diagonal operators of $A_1$ generate exponentially stable semigroups and since $\Cscr$ is admissible for $A_s$, it follows that $A_1$ is the generator of an exponentially stable semigroup. Furthermore, there exists an $M_2 > 0$ such that for all $\epsilon > 0$ sufficiently small, the estimate $\|(\lambda - A_1)^{-1}\|\leq M_2/\epsilon$ holds for all $\lambda \in \overline \C_+$. Since $A_2$ is bounded, this implies that
$$
	\|\epsilon^2A_2(\lambda - A_1)^{-1}\| \leq \epsilon \|A_2\|M_2,\qquad
		\lambda\in\overline\C_+,
$$
so that for $\epsilon < (\|A_2\|M_2)^{-1}$ we have $\|\epsilon^2A_2(\lambda - A_1)^{-1}\| < 1$ on the closed right half plane. Using another Neumann series expansion, we obtain that
$$
	(\lambda - \hat{A}_e)^{-1}= 
	(\lambda - A_1)^{-1}(1 - \epsilon^2A_2(\lambda - A_1)^{-1})^{-1}
$$
is uniformly bounded on $\overline\C_+$.

Thus, by the preceding argument  there exists an $\epsilon^* > 0$ such that the resolvent of $\hat A_e$ is uniformly bounded on $\overline\C_+$ for all $0<\epsilon<\epsilon^*$. By the Gearhart-Pr\"uss-Greiner theorem, the semigroup $\hat{\mathbb{T}}_e$ generated by $\hat{A}_e$ is exponentially stable, and therefore, the semigroup $R\hat{\mathbb{T}}_eR^{-1}$ generated by $A_e$ is exponentially stable as well, for all $0<\epsilon<\epsilon^*$.
\end{proof}
\label{lem:clstab}
\end{lemma}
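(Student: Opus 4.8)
The plan is to exploit the smallness of $\epsilon$ through a perturbation argument, combined with the Gearhart--Pr\"uss--Greiner characterization of exponential stability via uniform boundedness of the resolvent on the closed right half-plane \cite[Thm V.1.11]{EnNaBook}. The essential difficulty is that $A_e$ fuses two fundamentally different dynamics: the already exponentially stable plant generator $A_s$, and the internal model block $\Gscr_1=\operatorname{diag}(i\omega_k I)$ whose spectrum sits exactly on the imaginary axis. The internal model can only be stabilized \emph{through} the plant, and the spectral hypothesis $\sigma(\Gscr_2^k P_N P_s(i\omega_k)K_0^k)\subset\cminus$ is precisely what pushes each marginal eigenvalue $i\omega_k$ into $\cminus$ once the low gain $K=\epsilon K_0$ is switched on, but only at the slow rate $O(\epsilon)$.

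First I would decouple the two blocks to leading order in $\epsilon$ via a similarity transformation $R=\sbm{-1 & \epsilon H \\ 0 & 1}$, where $H=(H_k)_k\in\Lscr(Z,X)$ is chosen to annihilate the $O(\epsilon)$ coupling generated by the operator $\Ascr B_s R_1 K_0$ (which is bounded by \eqref{eq:Bprop}). The natural choice is to solve the column-wise Sylvester equations $(i\omega_k-A_s)H_k=(\Ascr B_s-i\omega_k B_s)R_1K_0^k$, which are solvable because $i\mathbb{R}\subset\res{A_s}$ by the exponential stability of $\T_s$; since $\Gscr_1$ is diagonal, summing over $k$ gives the operator identity $H\Gscr_1=A_s H+\Ascr B_s R_1 K_0-B_s R_1 K_0\Gscr_1$. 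As $\Rscr(H)\subset\Nscr(\Bscr+R_1Q\Cscr)$, the transformation preserves $\dom{A_e}$, and on substituting the identity I expect $\hat A_e:=RA_eR^{-1}$ to become block lower-triangular up to an $O(\epsilon^2)$ remainder, with diagonal blocks $A_s-\epsilon\tilde H\Gscr_2\Cscr$ and $\Gscr_1+\epsilon\Gscr_2\Cscr\tilde H$, where $\tilde H:=H+B_sR_1K_0$.

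Next I would establish exponential stability of the two diagonal blocks separately. For the top-left block, admissibility of $\Cscr$ for the exponentially stable $\T_s$ gives, via \cite[Thm 4.3.7]{TuWeBook}, a uniform bound on $\Cscr(\lambda-A_s)^{-1}$ over $\overline\C_+$; a Neumann series in $\epsilon$ then shows $(\lambda-A_s+\epsilon\tilde H\Gscr_2\Cscr)^{-1}$ is uniformly bounded on $\overline\C_+$, whence stability by Gearhart--Pr\"uss. For the bottom-right block, the key observation is the identity $\Cscr(H_k+B_sR_1K_0^k)=P_s(i\omega_k)K_0^k$, obtained by inspecting the definition of $P_0$; this identifies the effective perturbation of $i\omega_k$ as $\Gscr_2^k P_N P_s(i\omega_k)K_0^k$, which the hypothesis places in $\cminus$, so that the low-gain perturbation estimates of \cite[Appendix B]{HamPoh11} yield a decay rate $\sim\epsilon$ together with a resolvent bound $\|(\lambda-(\Gscr_1+\epsilon\Gscr_2\Cscr\tilde H))^{-1}\|\le M_1/(\epsilon\beta)$ on $\overline\C_+$.

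The crux, and the step I expect to be the main obstacle, is reassembling the pieces while respecting the powers of $\epsilon$. Writing $\hat A_e=A_1+\epsilon^2A_2$ with $A_1$ the block-triangular part and $A_2$ bounded, the triangular $A_1$ inherits exponential stability from its diagonal blocks (the off-diagonal coupling $-\Gscr_2\Cscr$ being absorbed by the admissibility of $\Cscr$). However, because the internal-model block is stabilized only at rate $O(\epsilon)$, the resolvent of $A_1$ is only bounded like $M_2/\epsilon$ on $\overline\C_+$. This is exactly why the remainder must be genuinely $O(\epsilon^2)$ rather than $O(\epsilon)$: the product $\epsilon^2A_2(\lambda-A_1)^{-1}$ is then $O(\epsilon)$, so for $\epsilon$ below some threshold its norm is $<1$ on $\overline\C_+$, and a final Neumann series furnishes a uniform resolvent bound for $\hat A_e$. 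Gearhart--Pr\"uss--Greiner then makes $\hat A_e$, and hence the similar operator $A_e=R^{-1}\hat A_eR$, the generator of an exponentially stable semigroup for all $0<\epsilon<\epsilon^*$. Verifying the claimed $O(\epsilon^2)$ form of the remainder after the similarity transform, and the delicate bookkeeping of these matching $\epsilon$-powers, is where the real work lies.
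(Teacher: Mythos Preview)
Your proposal is correct and follows essentially the same approach as the paper: the same similarity transform $R=\sbm{-1&\epsilon H\\0&1}$ with $H_k=(i\omega_k-A_s)^{-1}(\Ascr B_s-i\omega_kB_s)R_1K_0^k$, the same block decomposition $\hat A_e=A_1+\epsilon^2A_2$ with diagonal blocks $A_s-\epsilon\tilde H\Gscr_2\Cscr$ and $\Gscr_1+\epsilon\Gscr_2\Cscr\tilde H$, the same identification $\Cscr\tilde H_k=P_s(i\omega_k)K_0^k$, and the same chain of Neumann-series/Gearhart--Pr\"uss arguments exploiting the $M_2/\epsilon$ resolvent bound against the $\epsilon^2$ remainder. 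The only point you flag as ``where the real work lies'' --- verifying the $O(\epsilon^2)$ form of the remainder --- is in the paper simply stated after a direct computation, so your anticipated obstacle is minor.
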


\subsection{A regulating controller} \label{ROR:nrrc}

The following theorem gives necessary and sufficient conditions for a controller to achieve output regulation for the plant \eqref{eq:plant}, i.e., a criterion equivalent to the solvability of the regulator equations. The result extends \cite[Thm 5.1]{PauPoh14} to boundary control systems. 

\begin{theorem}
Assume that the closed-loop system is regular and exponentially stabilized by the controller $(\Gscr_1, \Gscr_2, K, Q)$. Then the controller solves the output regulation problem if and only if the equations
\begin{subequations}
\begin{align}
	P_s(i\omega_k) K z_k & = -P_0(i\omega_k)E_s\phi_k - F\phi_k \label{eq:nrc1} \\
	(i\omega_k - \Gscr_1)z_k & = 0 \label{eq:nrc2}
\end{align}
\end{subequations}
have solutions $z_k \in Z$ for all $k \in \{1,2,\ldots, q\}$, where $\{\phi_k\}_{k=1}^q$ is the Euclidean basis of $\mathbb{C}^q$. Furthermore, the solutions $z_k$ are unique when they exist.
\begin{proof}
Let us first assume that the controller solves the output regulation problem, i.e., by Theorem \ref{thm:PauPoh14_4.1} the  regulator equations have a solution $\Sigma = (\Pi, \Gamma)^T \in \Lscr(W, X_e)$. Let $k \in \{1,2,\ldots, q\}$ be arbitrary. As $\phi_k$ is an eigenvector of $S$, applying the Sylvester equation $\Sigma S = A_e\Sigma + B_e$ to $\phi_k$ yields $(i\omega_k -  A_e)\Sigma \phi_k =  B_e \phi_k$, i.e.,
\begin{align*}
& \bbm{(i\omega_k - A_s + B_sR_1K\Gscr_2\Cscr)\Pi\phi_k - (\Ascr B_sR_1K - B_sR_1K\tilde\Gscr_1)\Gamma \phi_k \\ -\Gscr_2\Cscr\Pi\phi_k + (i\omega_k - \tilde\Gscr_1)\Gamma\phi_k} \\ 
& = \bbm{(\Ascr B_sE_s - B_sE_sS - B_sR_1K\Gscr_2(\Cscr B_sE_s + F))\phi_k \\ \Gscr_2(\Cscr B_sE_s + F)\phi_k}.
\end{align*}
where we again denote $\tilde\Gscr_1 := \Gscr_1+\Gscr_2\Cscr B_sR_1K$. The second line implies 
\begin{equation}
(i\omega_k - \Gscr_1)\Gamma\phi_k = \Gscr_2(\Cscr\Pi + \Cscr B_sR_1 K \Gamma + (\Cscr B_sE_s + F))\phi_k.
\label{eq:req1line2}
\end{equation}
Now, as applying the second regulator equation to $\phi_k$ yields 
\begin{equation}
\label{eq:reg2}
0 = C_e\Sigma\phi + D_e\phi_k = \Cscr\Pi\phi_k + \Cscr B_sR_1 K\Gamma\phi_k + (\Cscr B_s E_s + F)\phi_k,
\end{equation}
it follows from \eqref{eq:reg2} and \eqref{eq:req1line2} that $(i\omega_k - \Gscr_1)\Gamma \phi_k = 0$. If we choose $z_k = \Gamma \phi_k$, then \eqref{eq:nrc2} follows immediately. Furthermore, from \eqref{eq:reg2} we obtain 
\begin{equation}
\label{eq:foo2}
\Cscr \Pi \phi_k = -\Cscr B_sR_1 K \Gamma \phi_k - (\Cscr B_sE_s + F)\phi_k.
\end{equation} 
Substituting $\Cscr\Pi\phi_k$ for \eqref{eq:foo2} in the first line of the Sylvester equation yields
\begin{equation}
(i\omega_k - A_s)\Pi \phi_k - \Ascr B_s R_1K\Gamma \phi_k + B_sR_1K\Gscr_1\Gamma \phi_k = (\Ascr B_sE_s - B_sE_sS)\phi_k,
\label{eq:reg1line1mod}
\end{equation}
and utilizing $S\phi_k = i\omega_k\phi_k$ and $\Gscr_1\Gamma \phi_k = i\omega\Gamma\phi_k$, we obtain from \eqref{eq:reg1line1mod} that
\begin{equation}
\Pi \phi_k = (i\omega_k - A_s)^{-1}(\Ascr B_s - i\omega_k B_s)(R_1K\Gamma \phi_k + E_s\phi_k).
\label{eq:foo}
\end{equation}
Finally, substituting $\Pi\phi_k$ for \eqref{eq:foo} in \eqref{eq:reg2} yields
$$
 0 = P_s(i\omega_k)K \Gamma \phi_k + P_0(i\omega_k)E_s\phi_k + F\phi_k,
$$
from which \eqref{eq:nrc1} follows as we chose $z_k = \Gamma\phi_k$.

Now assume that equations \eqref{eq:nrc1}--\eqref{eq:nrc2} have solutions $z_k \in Z$. Define $\Pi \in \Lscr(W, X), \Gamma \in \Lscr(W, Z)$ and $\Sigma = (\Pi, \Gamma)^T$ by
\begin{equation}
\begin{split}
	\Gamma & := \sum_{k=1}^q \langle \cdot, \phi_k \rangle z_k, \\
	 \Pi & := \sum_{k=1}^q \langle \cdot, \phi_k \rangle (i\omega_k - A_s)^{-1}(\Ascr B_s - i\omega_k B_s)(R_1Kz_k + E_s\phi_k).
	 \end{split}
\label{eq:gammapi}
\end{equation}
The definitions imply that $\Rscr(\Sigma) \subset \Dscr(  A_e ) \subset \Dscr (  C_e)$, and we will show that $\Sigma$ is the solution of the regulator equations.

Let $k \in \{1,2,\ldots, q\}$ be arbitrary. Considering the first line of $(i\omega_k -  A_e)\Sigma \phi_k -  B_e\phi_k$, we obtain using \eqref{eq:nrc2}, $S\phi_k = i\omega_k$, the definition of $\Pi$, and  \eqref{eq:nrc1} that
$$
\begin{aligned}
 &\quad  (i\omega_k - A_s)\Pi\phi_k + B_sR_1 K\Gscr_2 \Cscr \Pi \phi_k \\
 & \quad - (\Ascr B_s R_1K - B_sR_1K(\Gscr_1 + \Gscr_2\Cscr B_sR_1K))\Gamma\phi_k \\
& \quad- (\Ascr B_s E_s - B_sE_sS - B_sR_1K\Gscr_2(\Cscr B_sE_s + F))\phi_k \\
& =  B_sR_1K\Gscr_2(\Cscr \Pi\phi_k + \Cscr B_sR_1 K\Gamma\phi_k + \Cscr B_sE_s\phi_k + F\phi_k) \\
& =  B_sR_1K\Gscr_2(P_s(i\omega_k)K\Gamma\phi_k + P_0(i\omega_k)E_s\phi_k + F\phi_k) = 0.
\end{aligned}
$$
Note that by \eqref{eq:nrc1} we also have 
\begin{align*}
	C_e\Sigma\phi_k + D_e\phi_k & = \Cscr \Pi\phi_k + \Cscr B_s R_1K\Gamma\phi_k + \Cscr B_sE_s\phi_k + F\phi_k \\
	& = P_s(i\omega_k)K\Gamma\phi_k + P_0(i\omega_k)E_s\phi_k + F\phi_k = 0,
\end{align*}
i.e., $\Sigma$ solves the second regulator equation. Finally, the second line of $(i\omega_k -  A_e)\Sigma \phi_k -  B_e\phi_k$ yields
$$
\begin{aligned}
	& -\Gscr_2\Cscr \Pi\phi_k + (i\omega_k - \Gscr_1)\Gamma\phi_k - \Gscr_2\Cscr B_sR_1K\Gamma\phi_k - \Gscr_2(\Cscr B_s E_s + F)\phi_k \\
	= & -\Gscr_2(\Cscr\Pi\phi_k + \Cscr B_sR_1K\Gamma\phi_k + \Cscr B_sE_s\phi_k + F\phi_k) = 0.
	\end{aligned}
$$
Thus, as $\{\phi_k\}_{k=1}^q$ is a basis of $\mathbb{C}^q$ and the choice of $k$ was arbitrary, $\Sigma$ is the solution of the regulator equations $\Sigma S =  A_e \Sigma +  B_e$ and $ C_e \Sigma +  D_e = 0$. Now, by Theorem \ref{thm:PauPoh14_4.1}, the controller solves the output regulation problem.

It yet remains to prove the uniqueness of the solutions $z_k$ of \eqref{eq:nrc1}--\eqref{eq:nrc2}. Let $z_k$ and $z_k'$ be two solutions of \eqref{eq:nrc1}--\eqref{eq:nrc2}, and use \eqref{eq:gammapi} to define $\Sigma = (\Pi, \Gamma)^T$ and $\Sigma' = (\Pi', \Gamma')^T$ corresponding to $z_k$ and $z_k'$, respectively. It now follows from the above proof that both $\Sigma$ and $\Sigma'$ satisfy the Sylvester equation, and by the uniqueness of the solution of the Sylvester equation we must have $\Sigma = \Sigma'$. In particular, $z_k = \Gamma\phi_k = \Gamma'\phi_k = z_k'$, i.e., the solutions $z_k$ of \eqref{eq:nrc1}--\eqref{eq:nrc2} are unique.
\end{proof}
\label{thm:nonrobustcontrol}
\end{theorem}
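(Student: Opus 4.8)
The plan is to reduce everything to Theorem~\ref{thm:PauPoh14_4.1}, which already identifies solvability of the output regulation problem with the existence (and uniqueness) of a bounded solution $\Sigma \in \Lscr(W,X_e)$ of the operator regulator equations \eqref{eq:regeq1}--\eqref{eq:regeq2}. Since $S=\operatorname{diag}(i\omega_1,\ldots,i\omega_q)$ is diagonal with distinct entries, the Euclidean basis vectors satisfy $S\phi_k=i\omega_k\phi_k$, and a bounded $\Sigma$ is determined entirely by the vectors $\Sigma\phi_k$. The whole proof is therefore a translation of \eqref{eq:regeq1}--\eqref{eq:regeq2}, evaluated at each $\phi_k$, into the scalar-parametrized conditions \eqref{eq:nrc1}--\eqref{eq:nrc2}, carried out using the block structure of $A_e$ in \eqref{eq:AeDef} together with that of $B_e$, $C_e$, $D_e$.

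For the forward direction I would take a solution $\Sigma=(\Pi,\Gamma)^T$ of \eqref{eq:regeq} furnished by Theorem~\ref{thm:PauPoh14_4.1} and apply \eqref{eq:regeq1} to $\phi_k$, obtaining $(i\omega_k-A_e)\Sigma\phi_k=B_e\phi_k$. Reading off the lower block (the $Z$-component) and recognizing that its right-hand side is exactly $\Gscr_2$ applied to the quantity appearing in the output equation \eqref{eq:regeq2} at $\phi_k$, that block collapses to $(i\omega_k-\Gscr_1)\Gamma\phi_k=0$; setting $z_k:=\Gamma\phi_k$ gives \eqref{eq:nrc2}. Then I would use \eqref{eq:regeq2} to eliminate $\Cscr\Pi\phi_k$ from the upper ($X$-)block, at which point the $\Gscr_2\Cscr B_s R_1 K$ terms cancel and, invoking $\Gscr_1\Gamma\phi_k=i\omega_k\Gamma\phi_k$ and $S\phi_k=i\omega_k\phi_k$, one solves for $\Pi\phi_k$ using that $i\omega_k\in\res{A_s}$ (guaranteed by membership of the unperturbed data in $\Oscr$ and by exponential stability of $\T_s$). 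Substituting the resulting $\Pi\phi_k$ back into \eqref{eq:regeq2} and collecting terms into the definitions of $P_0$ and $P_s$ (see \eqref{eq:transf}) yields precisely \eqref{eq:nrc1}.

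For the converse, given solutions $z_k$ of \eqref{eq:nrc1}--\eqref{eq:nrc2} I would construct $\Sigma=(\Pi,\Gamma)^T$ explicitly by setting $\Gamma\phi_k:=z_k$ and
$$
\Pi\phi_k := (i\omega_k-A_s)^{-1}(\Ascr B_s-i\omega_k B_s)(R_1 K z_k + E_s\phi_k),
$$
extended linearly over $W$. One first checks $\Rscr(\Sigma)\subset\dom{A_e}\subset\dom{C_e}$, which follows from \eqref{eq:Bprop} and the resolvent formula, and then verifies the two regulator equations block by block: \eqref{eq:nrc1} makes the output equation and the upper Sylvester block close, while \eqref{eq:nrc2} handles the lower block. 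Theorem~\ref{thm:PauPoh14_4.1} then delivers output regulation. Uniqueness is immediate, since two solution families $\{z_k\}$, $\{z_k'\}$ generate $\Sigma$, $\Sigma'$ both satisfying \eqref{eq:regeq1}, and the uniqueness of the Sylvester solution in Theorem~\ref{thm:PauPoh14_4.1} forces $\Sigma=\Sigma'$, whence $z_k=\Gamma\phi_k=\Gamma'\phi_k=z_k'$.

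I expect the main obstacle to be organizational rather than conceptual: the block-operator algebra of the unbounded generator $A_e$ must be executed carefully, and at each substitution one must confirm that the intermediate vectors—above all $\Pi\phi_k$—lie in the correct domains so that $\Cscr$ and $A_e$ may legitimately be applied. The single genuine structural input is the invertibility of $i\omega_k-A_s$, which is what allows the $X$-block to be solved explicitly; everything else is bookkeeping tuned so that the $P_0$ and $P_s$ definitions emerge on the nose.
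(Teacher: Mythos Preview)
Your proposal is correct and follows essentially the same route as the paper's own proof: reduce to Theorem~\ref{thm:PauPoh14_4.1}, evaluate the regulator equations at each $\phi_k$, use the lower block together with $C_e\Sigma\phi_k+D_e\phi_k=0$ to obtain \eqref{eq:nrc2}, eliminate $\Cscr\Pi\phi_k$ from the upper block and invert $i\omega_k-A_s$ to derive \eqref{eq:nrc1}, and for the converse build $\Sigma$ from the explicit $\Pi\phi_k$ formula and verify both blocks; uniqueness is deduced from uniqueness of the Sylvester solution exactly as you describe.
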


Based on Theorem \ref{thm:nonrobustcontrol}, we can now construct a regulating controller for the plant \eqref{eq:plant}. Choose $Z = W$ and choose the controller parameter $Q \in \Lscr(Y,U)$ such that the semigroup $\mathbb{T}_s$ generated by $A_s$ is exponentially stable and $\Cscr$ is an admissible observation operator for $A_s$. Choose the remaining parameters as
\begin{subequations}
\begin{align}
	\Gscr_1 & = S = \operatorname{diag}(i\omega_1,i\omega_2,\ldots, i\omega_q),  \label{eq:nrrcG1} \\
	K & = \epsilon K_0 = \epsilon\left[u_1, u_2, \ldots, u_q\right],  \label{eq:nrrcK} \\
	\Gscr_2 & = (\Gscr_2^k)_{k=1}^q = (-(P_s(i\omega_k)u_k)^*)_{k=1}^q, \label{eq:nrrcG2}
\end{align}
\end{subequations}
where $\epsilon > 0$ is called the tuning parameter and $u_k \in U$ are chosen such that \cite[Sec. 4.2]{Pau17b}
\begin{equation}
	\begin{cases}
		P_s(i\omega_k)u_k = y_k, & y_k \neq 0, \\
		u_k \notin \Nscr(P_s(i\omega_k))\text{  arbitrary}, & y_k = 0,
	\end{cases}
	\label{eq:uk}
\end{equation}
where we denote $y_k  = -P_0(i\omega_k)E_s\phi_k - F\phi_k$. For this to be possible, we need to assume that $P_s(i\omega_k) \neq 0$ and $y_k \in \Rscr(P_s(i\omega_k))$ for all $k \in \{1,2,\ldots,q\}$, so that  there exist some $u_k\in U$ satisfying \eqref{eq:uk}. However, this assumption is also necessary for the solvability of the output regulation problem by Theorem \ref{thm:nonrobustcontrol}.

\begin{proposition}
There exists an $\epsilon^* > 0$ such that the controller with the parameter choices \eqref{eq:nrrcG1}--\eqref{eq:nrrcG2} solves the output regulation problem for all $0 < \epsilon < \epsilon^*$.
\label{thm:nrrc}
\begin{proof}
First of all we note that the choices of $\Gscr_1$ and $K$ imply that the equations \eqref{eq:nrc1}--\eqref{eq:nrc2} have the solutions $z_k = \epsilon^{-1}\phi_k$ if $P_0(i\omega_k)E_s\phi_k + F\phi_k \neq 0$ or $z_k = 0$ otherwise. Now, as $Q$ exponentially stabilizes the plant and $\Cscr$ is admissible for $A_s$, $\sigma(\Gscr_1) = \{i\omega_k\}_{k=1}^q$, and
$$
	\sigma(\Gscr_2^kP_s(i\omega_k)K_0^k) = \sigma(-(P_s(i\omega_k)u_k)^*P_s(i\omega_k)u_k) \subset \mathbb{C}_-
$$
as $P_s(i\omega_k)u_k \neq 0$ for $k \in \{1,2,\ldots,q\}$,  we have by Lemma \ref{lem:clstab} that there exists an $\epsilon^* > 0$ such that the closed-loop system is exponentially stable for all $0 < \epsilon < \epsilon^*$. Thus, by Theorem \ref{thm:nonrobustcontrol} the controller solves the output regulation problem.
\end{proof}
\end{proposition}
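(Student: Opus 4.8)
The plan is to combine the structural characterization in Theorem~\ref{thm:nonrobustcontrol} with the stability tool in Lemma~\ref{lem:clstab}, so that the proof splits into two essentially independent verifications followed by a single invocation of the theorem. Concretely, I would first establish exponential stability of the closed loop for small $\epsilon$ by checking that the present parameter choices meet the hypotheses of Lemma~\ref{lem:clstab}, then exhibit solutions of \eqref{eq:nrc1}--\eqref{eq:nrc2}, and finally conclude via Theorem~\ref{thm:nonrobustcontrol}.

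For the stability step I would match the construction to Lemma~\ref{lem:clstab}. Here $Z = W = \C^q$, so in the notation of the lemma $Y_N = \C$ and $P_N$ is the identity on $Y$. The conditions on $Q$ and on the admissibility of $\Cscr$ for $\T_s$ hold by the standing choice of $Q$, and $\Gscr_1 = S$ gives $\sigma(\Gscr_1) = \{i\omega_k\}_{k=1}^q$ by construction. The remaining spectral condition $\sigma(\Gscr_2^k P_N P_s(i\omega_k)K_0^k)\subset\C_-$ reduces to a one-line scalar computation: with $K_0^k = u_k$ and $\Gscr_2^k = -(P_s(i\omega_k)u_k)^*$ one gets
$$
	\Gscr_2^k P_s(i\omega_k)K_0^k = -\bigl(P_s(i\omega_k)u_k\bigr)^* P_s(i\omega_k)u_k = -\bignorm{P_s(i\omega_k)u_k}^2,
$$
which is a strictly negative real number because $P_s(i\omega_k)u_k\neq0$ by the choice~\eqref{eq:uk}. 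Lemma~\ref{lem:clstab} then supplies an $\epsilon^*>0$ such that the closed loop \eqref{eq:clloopsys} is exponentially stable for all $0<\epsilon<\epsilon^*$; by Theorem~\ref{thm:CLsemigroup} it is in addition regular, so the standing hypotheses of Theorem~\ref{thm:nonrobustcontrol} are in force.

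Next I would solve \eqref{eq:nrc1}--\eqref{eq:nrc2} directly. Since $\Gscr_1 = S = \operatorname{diag}(i\omega_1,\ldots,i\omega_q)$ has distinct (hence simple) eigenvalues, \eqref{eq:nrc2} forces $z_k\in\spn\set{\phi_k}$, say $z_k = c_k\phi_k$. Then $Kz_k = \epsilon c_k K_0\phi_k = \epsilon c_k u_k$, so \eqref{eq:nrc1} becomes $\epsilon c_k P_s(i\omega_k)u_k = y_k$ with $y_k = -P_0(i\omega_k)E_s\phi_k - F\phi_k$. If $y_k\neq0$, the normalization $P_s(i\omega_k)u_k = y_k$ from~\eqref{eq:uk} yields $c_k = \epsilon^{-1}$; if $y_k = 0$, then $z_k = 0$ works. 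Hence \eqref{eq:nrc1}--\eqref{eq:nrc2} are solvable (their uniqueness is part of Theorem~\ref{thm:nonrobustcontrol} and is not separately needed here).

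With both facts in hand, Theorem~\ref{thm:nonrobustcontrol} applies to the regular, exponentially stabilized closed loop and gives that the controller solves the output regulation problem for every $0<\epsilon<\epsilon^*$, as claimed. I expect the only genuine subtlety to be the bookkeeping that aligns this specific construction with the hypotheses of Lemma~\ref{lem:clstab}---in particular the identification $Y_N=\C$, $P_N = I$ and the collapse of the spectral condition to the single negative scalar $-\norm{P_s(i\omega_k)u_k}^2$; everything else is routine manipulation built on the eigenstructure of $S$ and the normalization~\eqref{eq:uk}.
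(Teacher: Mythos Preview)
Your proposal is correct and follows essentially the same approach as the paper's proof: verify the spectral hypothesis of Lemma~\ref{lem:clstab} via the scalar computation $\Gscr_2^k P_s(i\omega_k)K_0^k = -\norm{P_s(i\omega_k)u_k}^2<0$, exhibit the solutions $z_k\in\set{\epsilon^{-1}\phi_k,0}$ of \eqref{eq:nrc1}--\eqref{eq:nrc2}, and conclude by Theorem~\ref{thm:nonrobustcontrol}. The only cosmetic difference is the order of the two verifications and your slightly more explicit bookkeeping on $Y_N=\C$, $P_N=I$ and regularity via Theorem~\ref{thm:CLsemigroup}.
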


\subsection{The Internal Model Principle} \label{ROR:ROR}

Before presenting an approximate robust controller in \S\ref{ROR:arrc} and a robust controller in \S\ref{ROR:rrc}, we will present a general result that characterizes robust controllers. That is, we will show that in order for a controller to achieve robust output regulation, it has to contain an  internal model of the dynamics of the exosystem. We will express this using the following $\Gscr$-conditions \cite[Def. 10]{HamPoh10}.
\begin{definition}
	A quadruple of bounded operators $(\Gscr_1, \Gscr_2, K, Q)$ is said to satisfy the $\Gscr$-conditions if
	\begin{subequations}
	\begin{align}
		\Rscr(i\omega_k - \Gscr_1) \cap \Rscr(\Gscr_2) & = \{0\}, \quad \forall k \in \{1,2,\ldots, q\} \label{eq:gcond1} \\
		\Nscr(\Gscr_2) & = \{0\}. \label{eq:gcond2}
	\end{align}
\label{eq:gconditions}%
\end{subequations}
	\label{def:gconditions}
\end{definition}

Note that while the parameters $K$ and $Q$ are not present in the $\Gscr$-conditions, they contribute to exponentially stabilizing the closed-loop system. The sufficiency part of the following result has been presented in the case $R_1=R_2 = I$ in \cite[Thm 4]{HumPau17} and the necessity part extends the results of \cite[Thm 5.2]{PauPoh10} and \cite[Thm 7]{Pau16a} to boundary control systems.

\begin{theorem}
Assume that the closed-loop system is regular and exponentially stabilized by the controller $(\Gscr_1, \Gscr_2, K, Q)$. Then the controller solves the robust output regulation problem if and only if it satisfies the $\Gscr$-conditions.
\begin{proof}
Let us assume that the controller solves the robust output regulation problem and show that \eqref{eq:gconditions} hold starting with \eqref{eq:gcond1}. Let $k \in \{1,2,\ldots,q\}$ be arbitrary and $w \in \Rscr(i\omega_k - \Gscr_1) \cap \Rscr (\Gscr_2)$.  Then there exist $z \in Z$ and $y \in Y$ such that $w = (i\omega_k - \Gscr_1)z = \Gscr_2 y$. Let us leave the operators $(\Ascr, \Bscr, \Cscr)$ unperturbed and choose such perturbations from $\Oscr$ that $E_s' = 0$ and $F' = \langle \cdot, \phi_k \rangle (y - P_s(i\omega_k)Kz)$. Choose $\Sigma = (\Gamma, \Pi)^T \in \Lscr(W,X_e)$ such that
$$
	\Gamma = \langle \cdot, \phi_k \rangle z, \quad \Pi = \langle \cdot, \phi_k\rangle(i\omega_k - A_s)(\Ascr B_s - i\omega_kB_s)R_1Kz,
$$
which can be shown to be the solution of the Sylvester equation by a direct computation. As $C_e\Sigma\phi_k + D_e'\phi_k  = 0$ by the controller solving the robust output regulation problem, we obtain
$$
\begin{aligned}
w & = (i\omega_k - \Gscr_1)z = \Gscr_2 y = \Gscr_2 (P_s(i\omega_k)Kz + F'\phi_k) \\
& = \Gscr_2(\Cscr\Pi\phi_k + \Cscr B_sR_1K\Gamma\phi_k + F'\phi_k) \\
& = \Gscr_2(C_e\Sigma\phi_k + D_e'\phi_k) = 0,
\end{aligned}
$$
and thus $w = 0$, which concludes the first part of the necessity proof.

Let us now show that \eqref{eq:gcond2} holds. Let $y \in \Nscr(\Gscr_2)$ and let $\phi \in W$ be such that $\|\phi\| = 1$. Leave the operators $(\Ascr,\Bscr, \Cscr)$ unperturbed and choose $E' = 0$ and $F' = \langle \cdot, \phi \rangle y \in \Lscr(W,Y)$. If we choose $\Sigma = 0 \in \Lscr(W, X_e)$, for all $v \in W$ we have $\Sigma Sv = 0$ and
$$
	 A_e \Sigma v +  B_e' v = \bbm{- B_sR_1 K\Gscr_2 F'v \\ \Gscr_2 F'v}
	 = \bbm{-\langle v, \phi \rangle B_sR_1 K \Gscr_2 y \\ \langle v, \phi \rangle \Gscr_2 y} = 0,
$$
and thus, $\Sigma = 0$ is the unique solution of the Sylvester equation. As the controller solves the robust output regulation problem, we have by Theorem \ref{thm:PauPoh14_4.1} that 
$$
	0 =  C_e \Sigma \phi +  D_e' \phi = F'\phi = \langle \phi, \phi \rangle y = y,
$$
which concludes the necessity proof. The sufficiency part follows by simple modifications from \cite[Thm. 4]{HumPau17}.
\end{proof}
\label{thm:gcond+expstab->ror}
\end{theorem}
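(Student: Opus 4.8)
The plan is to reduce everything to Theorem~\ref{thm:PauPoh14_4.1}, which identifies solvability of the output regulation problem (for a fixed plant and exosystem) with solvability of the regulator equations~\eqref{eq:regeq}. Robust output regulation then means that~\eqref{eq:regeq1}--\eqref{eq:regeq2} must be solvable for \emph{every} perturbation in $\Oscr$ that keeps the closed loop exponentially stable. The crucial observation enabling the proof is that perturbing only $E$ and $F$ to arbitrary bounded operators $E', F'$ alters merely $B_e$ and $D_e$, leaving $A_e$, $C_e$ and hence the closed-loop semigroup $\T_e$ untouched; every such perturbation therefore stays in $\Oscr$ and preserves exponential stability, giving a rich family of test perturbations with which to probe the controller.

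For the necessity direction I would read off each $\Gscr$-condition from the second regulator equation~\eqref{eq:regeq2} under a tailored perturbation. To obtain~\eqref{eq:gcond1}, take $w\in\Rscr(i\omega_k-\Gscr_1)\cap\Rscr(\Gscr_2)$, write $w=(i\omega_k-\Gscr_1)z=\Gscr_2 y$, and choose a perturbation fixing $(\Ascr,\Bscr,\Cscr)$ with $E_s'=0$ and $F'=\Ipdp{\cdot}{\phi_k}\,(y-P_s(i\omega_k)Kz)$. One writes down an explicit $\Sigma=(\Pi,\Gamma)^T$ supported on the $k$-th eigenvector of $S$, verifies by substitution that it solves the Sylvester equation~\eqref{eq:regeq1}, and concludes from~\eqref{eq:regeq2}---which holds by robustness---that $w=\Gscr_2(C_e\Sigma\phi_k+D_e'\phi_k)=0$. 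Condition~\eqref{eq:gcond2} is cheaper: for $y\in\Nscr(\Gscr_2)$ the perturbation $E'=0$, $F'=\Ipdp{\cdot}{\phi}\,y$ makes $\Sigma=0$ the unique Sylvester solution, so~\eqref{eq:regeq2} forces $F'\phi=y=0$.

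For sufficiency the point is that the $\Gscr$-conditions encode an internal model: $\Gscr_2$ is injective and its range avoids each $\Rscr(i\omega_k-\Gscr_1)$, which forces the controller to carry enough copies of every exosystem frequency $i\omega_k$ that the steady-state error is annihilated regardless of the perturbed plant data. I would check that the present controller, once the boundary restriction operators $R_1,R_2$ are accounted for, meets the hypotheses of~\cite[Thm.~4]{HumPau17} and then invoke that result; the only work is the bookkeeping arising from $R_1\ne I$ and $R_2\ne I$.

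The main obstacle I anticipate is in the necessity direction: verifying that the explicitly constructed $\Sigma$ genuinely solves~\eqref{eq:regeq1} in the boundary control setting. In contrast to the regular-system case, $A_e$ is assembled from unbounded boundary operators through the right-inverse $B_s$ and the auxiliary function $P_0$, so the substitution must be handled carefully to guarantee $\Rscr(\Sigma)\subset\dom{A_e}$ and to see the unbounded terms cancel. The identity $\Cscr(i\omega_k-A_s)^{-1}(\Ascr B_s-i\omega_k B_s)R_1+\Cscr B_s R_1=P_s(i\omega_k)$ will be the workhorse that closes this computation.
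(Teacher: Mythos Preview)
Your proposal is correct and follows essentially the same approach as the paper: the same test perturbations of $(E,F)$ are used to derive each $\Gscr$-condition via the regulator equations, and sufficiency is likewise deferred to \cite[Thm.~4]{HumPau17} modulo the $R_1,R_2$ bookkeeping. The obstacle you flag---verifying that the explicit $\Sigma$ lands in $\dom{A_e}$ and solves the Sylvester equation---is exactly the computation the paper leaves as ``direct,'' and the transfer-function identity you single out is indeed the key ingredient.
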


\begin{remark}
Theorem \ref{thm:gcond+expstab->ror} states that any controller that stabilizes a regular closed-loop system exponentially and satisfies the $\Gscr$-conditions solves the robust output regulation problem. In particular, this implies that if a robust regulating controller $(\Gscr_1,\Gscr_2, K, Q)$ is constructed, then every controller $(\Gscr_1,\Gscr_2', K', Q')$, where $(\Gscr_2', K', Q')$ are boundedly perturbed $(\Gscr_2, K, Q)$, solves the robust output regulation problem, provided that the closed-loop system remains exponentially stable and ($\Gscr_1, \Gscr_2')$ satisfy the $\Gscr$-conditions. Note that only rather specific perturbations would be allowed in $\Gscr_1$ as it has to include an exact internal model of the dynamics of the exosystem.
\label{rem:contr_per}
\end{remark}

Note that the rank-nullity theorem and the second $\Gscr$-condition imply that $\dim Z \geq \dim \Rscr(\Gscr_2) = \dim Y$. Thus, if the output space of the system is infinite-dimensional as, e.g., in the wave equation of \S\ref{sec:firstord}, Theorem \ref{thm:gcond+expstab->ror} implies that robust controllers for such systems are necessarily infinite-dimensional. However, we can construct a finite-dimensional controller that solves the robust output regulation problem \emph{approximately}. We will construct such a controller in the next section. Finally, in \S\ref{ROR:rrc} we will construct an infinite-dimensional controller that achieves exact robust output regulation. The following assumption is required for the remaining sections:
\begin{assumption}
  The transfer function $P_s(\lambda)$ is surjective at all the eigenvalues $\{i\omega_k\}_{k=1}^q$ of $S$.
\end{assumption}

\subsection{An approximate robust controller} \label{ROR:arrc}

In this section, we consider approximate robust output regulation on $Y$. We will solve the control problem by choosing a subspace $Y_N$ of $Y$ and constructing a controller that robustly tracks the reference signal projected onto $Y_N$. If $Y_N$ is chosen to be finite-dimensional, we can construct a finite-dimensional robust regulating controller even if the output space of the system is infinite-dimensional. Furthermore, we derive an upper bound for the asymptotic regulation error. Our result generalizes the controller structure presented in \cite[Thm. 3.5]{Pau16barxiv} where discrete-time systems with constant reference signals were considered.

Let $Y_N$ be a closed subspace of $Y$ and choose $Z := Y_N^q$. Choose the controller parameter $Q \in \Lscr(Y,U)$ such that the semigroup $\mathbb{T}_s$ generated by $A_s$ is exponentially stable and $\Cscr$ is an admissible observation operator for $\mathbb{T}_s$. Choose the remaining parameters as
\begin{subequations}
\begin{align}
	\Gscr_1 & = \operatorname{diag}(i\omega_1I_{Y_N},i\omega_2I_{Y_N},\ldots, i\omega_qI_{Y_N}),  \label{eq:acG1} \\
	K & = \epsilon K_0 = \epsilon[K_0^1, K_0^2, \ldots, K_0^q ] \in \Lscr (Z, U),  \label{eq:acK} \\
	\Gscr_2 & = (\Gscr_{20}^kP_N)_{k=1}^q \in \Lscr(Y,Z), \label{eq:acG2}
\end{align}
\end{subequations}
where $P_N$ is a projection onto $Y_N$, and $\Gscr_{20}^k$ and $K_0^k$ are such that 
\begin{equation}
\sigma(\Gscr_{20}^kP_NP_s(i\omega_k)K_0^k) \subset \mathbb{C}_-
\label{eq:approxsc}
\end{equation} 
for all $k \in \{1,2,\ldots,q\}$. We can choose, e.g., $\Gscr_{20}^k = -I_{Y_N}$ and $K_0^k = \left(P_N P_s(i\omega_k)\right)^{[-1]}$ for $k \in \{1,2,\ldots,q\}$, and conversely, the spectrum condition implies that $\Gscr_{20}^k$ and $P_NP_s(i\omega_k)K_0^k$ are boundedly invertible. 

In the following theorem, we show that a controller with the aforementioned structure solves the approximate robust output regulation problem. Furthermore, we will show that for some constants $M,\alpha >0$ and all $t\geq 0$ the regulation error satisfies
\begin{equation}
\label{eq:appr_error}
\int_t^{t+1}\|e(s)\|^2 \ud s \leq Me^{-\alpha t}(\|x_{e0}\|^2 + \|v_0\|^2) + \delta\|v_0\|^2,
\end{equation}
where $x_{e0}$ and $v_0$ are the initial states of the closed-loop system and the exosystem, respectively, and $\delta$ is given by
\begin{equation}
\label{eq:delta}
\delta = \left\|(I - P_N) \sum_{k=1}^q \big(P_s(i\omega_k)Kz_k + P_0(i\omega_k)E_sv_k + Fv_k\big)\right\|^2,
\end{equation}
where $v_k$ are the components of the unit vector $v_{\max} \in W$ satisfying  $\|C_e\Sigma + D_e\| = \|C_e\Sigma v_{\max} + D_e v_{\max}\|_Y$ and $z_k = \Gamma v_k$ where $\Gamma$ is given in \eqref{eq:sylsol_appr}. Note that since $W$ is finite dimensional, $v_{\max}$ is well-defined. Further note that we cannot guarantee pointwise convergence for the regulation error, and therefore the upper bound is presented in the integral form. Finally, since $\sum_{k=1}^q \big(P_s(i\omega_k)Kz_k + P_0(i\omega_k)E_sv_k + Fv_k\big) \in Y$, the projection $P_N$ (or rather the space $Y_N$) can be chosen such that $\delta$ becomes arbitrarily small. We will demonstrate this procedure in \S\ref{sec:waveex} for the wave equation.

\begin{theorem}
\label{thm:approxcontrol}
There exists an $\epsilon^* > 0$ such that for all $0 < \epsilon < \epsilon^*$ the controller with the parameter choices \eqref{eq:acG1}--\eqref{eq:acG2} solves the approximate robust output regulation problem and there exist some constants $M, \alpha > 0$ such that for all $t \geq 0$ the regulation error satisfies \eqref{eq:appr_error}. 

Furthermore, the controller is robust with respect to those perturbations of class $\Oscr$ that give rise to an exponentially stable perturbed closed-loop system, and the regulation error behaves as in \eqref{eq:appr_error} for the perturbed parameters of the plant and the exosystem.

\begin{proof}
By Lemma \ref{lem:clstab}, the closed-loop system is exponentially stable for all sufficiently small $\epsilon > 0$. Thus, as $\sigma(S) \subset i\mathbb{R}$, the Sylvester equation has a unique solution $\Sigma = (\Pi, \Gamma)^T$, and a direct computation using \eqref{eq:transf} verifies that
\begin{equation}
\begin{split}
	\Gamma & = (\Gamma_k)_{k=1}^q = -\epsilon^{-1}\big(\langle \cdot, \phi_k \rangle (P_N P_s(i\omega_k)K_0^k)^{-1}P_N(P_0(i\omega_k)E_s + F)\phi_k\big)_{k=1}^q, \\
	\Pi & = \sum_{k=1}^q \langle \cdot, \phi_k\rangle(i\omega_k - A_s)^{-1}(\Ascr B_s - i\omega_k B_s)(R_1K\Gamma +  E_s)\phi_k
\end{split}
\label{eq:sylsol_appr}%
\end{equation}
solves $\Sigma S\phi_k = A_e\Sigma \phi_k + B_e \phi_k$, i.e., $(i\omega_k - A_e)\Sigma \phi_k = B_e \phi_k$ for all $k \in \{1,2,\ldots,q\}$. Here one also uses that our $\Gamma$ satisfies
\begin{equation}
	P_NP_s(i\omega_k)K\Gamma\phi_k  = \epsilon P_NP_s(i\omega_k)K_0^k\Gamma_k\phi_k =-P_NP_0(i\omega_k)E_s\phi_k - P_NF\phi_k.
\label{eq:reg2appr}
\end{equation}
Note that \eqref{eq:sylsol_appr} is well-defined and bounded since $P_NP_s(i\omega_k)K_0^k$ are boundedly invertible by \eqref{eq:approxsc}.

Let us now consider the behavior of the regulation error. By \cite[Lem. 4.3]{PauPoh14}, we may write 
$$
	e(t) = C_e\mathbb{T}_e(t)(x_{e0} - \Sigma v_0) + (C_e\Sigma + D_e)v(t),
$$
and we obtain that for all $t \geq 0$
$$
\begin{aligned}
   \int_t^{t+1}\|e(s)\|^2\ud s
	& = \int_t^{t+1} \|C_e\mathbb{T}_e(s)(x_{e0} - \Sigma v_0) + (C_e\Sigma + D_e)v(s)\|^2\ud s \\
	& \leq Me^{-\alpha t}(\|x_{e0}\|^2 + \|v_0\|^2) + \|C_e\Sigma + D_e\|^2\|v_0\|^2
\end{aligned}
$$
for some $M,\alpha > 0$ as $\Sigma$ is bounded, $\mathbb{T}_e$ is exponentially stable, $C_e$ is admissible for $\mathbb{T}_e$, and due to the structure of the signal generator $\|v(t)\| = \|e^{St}v_0\| = \|v_0\|$.

We will show that
$$
C_e\Sigma v_{\max} + D_ev_{\max} = (I - P_N)\sum_{k=1}^q \big(P_s(i\omega_k)Kz_k + P_0(i\omega_k)E_sv_k + Fv_k\big).
$$
A direct computation using \eqref{eq:sylsol_appr} shows that
\begin{equation}
C_e\Sigma v_{\max} + D_e v_{\max} = \sum_{k=1}^q \big(P_s(i\omega_k)K\Gamma v_k + P_0(i\omega_k)E_sv_k + Fv_k\big),
	\label{eq:appr_err1}
\end{equation}
Denoting $z_k = \Gamma v_k$, we have by \eqref{eq:reg2appr} that
\begin{equation}
	P_NP_s(i\omega_k)Kz_k = -P_N P_0(i\omega_k)E_sv_k - P_N Fv_k,
\label{eq:actmp2}
\end{equation}
and now, combining \eqref{eq:actmp2} with \eqref{eq:appr_err1} yields
$$
	C_e \Sigma v_{\max} +  D_ev_{\max} = (I - P_N)\sum_{k=1}^q \big(P_s(i\omega_k)Kz_k + P_0(i\omega_k)E_sv_k + Fv_k\big),
$$
which implies \eqref{eq:delta}, and thus, \eqref{eq:appr_error}.

If the parameters $(\Ascr, \Bscr, \Cscr, E, F)$ are perturbed in such a way that the closed-loop system remains exponentially stable, then the regulation error asymptotically satisfies $\int_t^{t+1}\|e(s)\|^2\ud s \leq M'e^{-\alpha't}(\|x_{e0}\|^2 + \|v_0\|^2) + \|C_e'\Sigma' + D_e'\|^2\|v_0\|^2$ for all $t \geq 0$, where $M',\alpha' > 0$, and $C_e', D_e'$ and $\Sigma'$ are related to the the perturbed closed-loop system. By repeating the above computations with the perturbed parameters we clearly obtain $C_e'\Sigma'v_{\max}' + D_e'v_{\max}' = (I-P_N)\sum_{k=1}^q P_s'(i\omega_k)Kz_k' + P_0'(i\omega_k)E_s'v_k' + F'v_k')$, where $z_k'$ is the unique solution of $P_NP_s'(i\omega_k)Kz_k' = -P_NP_0'(i\omega_k)E_s'v_k' - P_NF'v_k'$ in $\Nscr(i\omega_k - \Gscr_1)$. Thus, the controller approximately solves the robust output regulation problem.
\end{proof}
\end{theorem}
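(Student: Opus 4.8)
The plan is to assemble the three assertions of the theorem — exponential stability, the error bound \eqref{eq:appr_error} with $\delta$ as in \eqref{eq:delta}, and approximate robustness — from the tools already in place, in that order. First I would establish exponential stability of the closed-loop system. The parameter choices \eqref{eq:acG1}--\eqref{eq:acG2} are exactly the special case of the structure in Lemma \ref{lem:clstab} in which $Y_N$ is a closed subspace of $Y$ (so $P_N$ is a genuine projection), with $\Gscr_2^k = \Gscr_{20}^k$, and the spectral hypothesis \eqref{eq:approxsc} coincides with the condition $\sigma(\Gscr_2^k P_N P_s(i\omega_k) K_0^k) \subset \mathbb{C}_-$ required there. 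Hence Lemma \ref{lem:clstab} furnishes an $\epsilon^* > 0$ for which $\mathbb{T}_e$ is exponentially stable whenever $0 < \epsilon < \epsilon^*$. Since $\sigma(S) \subset i\mathbb{R}$, the Sylvester equation $\Sigma S = A_e\Sigma + B_e$ then has a unique solution $\Sigma = (\Pi, \Gamma)^T \in \Lscr(W, X_e)$ by \cite[Cor. 8]{Pho91}, exactly as in the proof of Theorem \ref{thm:PauPoh14_4.1}.

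Second, I would verify that the explicit formulas \eqref{eq:sylsol_appr} produce this unique $\Sigma$. Writing the Sylvester equation componentwise as $(i\omega_k - A_e)\Sigma\phi_k = B_e\phi_k$ and repeating the row-by-row reduction from the proof of Theorem \ref{thm:nonrobustcontrol}, the first (state) component collapses to the stated definition of $\Pi$ once one knows that $\Gamma$ satisfies the projected identity \eqref{eq:reg2appr}, i.e. $P_N P_s(i\omega_k) K\Gamma\phi_k = -P_N P_0(i\omega_k) E_s\phi_k - P_N F\phi_k$. This identity holds by the choice of $\Gamma$ in \eqref{eq:sylsol_appr}, which is well defined and bounded precisely because $P_N P_s(i\omega_k) K_0^k$ is boundedly invertible under \eqref{eq:approxsc}. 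The second (controller) component of the Sylvester equation is then immediate from $\Gamma\phi_k \in \Nscr(i\omega_k - \Gscr_1)$.

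Third, I would estimate the regulation error. By \cite[Lem. 4.3]{PauPoh14} one has $e(t) = C_e\mathbb{T}_e(t)(x_{e0} - \Sigma v_0) + (C_e\Sigma + D_e)v(t)$. Integrating $\|e(s)\|^2$ over $[t, t+1]$ and using that $C_e$ is admissible for the exponentially stable $\mathbb{T}_e$ bounds the transient term by $Me^{-\alpha t}(\|x_{e0}\|^2 + \|v_0\|^2)$ (with $\|\Sigma\|$ absorbed into $M$), while $\|v(s)\| = \|v_0\|$, since the semigroup generated by the skew-adjoint $S$ is unitary, bounds the steady-state term by $\|C_e\Sigma + D_e\|^2\|v_0\|^2$. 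It then remains to identify $\|C_e\Sigma + D_e\|^2$ with $\delta$. Evaluating at the maximizing unit vector $v_{\max}$ and inserting \eqref{eq:sylsol_appr} gives \eqref{eq:appr_err1}; substituting the projected identity \eqref{eq:actmp2} (the restriction of \eqref{eq:reg2appr} to the components $v_k$ of $v_{\max}$, with $z_k := \Gamma v_k$) cancels the $P_N$-part exactly and leaves $C_e\Sigma v_{\max} + D_e v_{\max} = (I - P_N)\sum_{k=1}^q (P_s(i\omega_k)Kz_k + P_0(i\omega_k)E_s v_k + F v_k)$, which is \eqref{eq:delta} and hence yields \eqref{eq:appr_error}.

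Finally, for approximate robustness I would repeat the entire argument with perturbed operators $(\Ascr', \Bscr', \Cscr', E', F') \in \Oscr$, producing $A_e', B_e', C_e', D_e'$ and the corresponding unique $\Sigma'$, under the standing assumption that the perturbed $\mathbb{T}_e$ remains exponentially stable. The decisive point — and where I expect the main subtlety to lie — is that the internal model resides in $\Gscr_1$, which is left unperturbed; consequently $\Gamma'\phi_k \in \Nscr(i\omega_k - \Gscr_1)$ still holds, and the perturbed projected equation $P_N P_s'(i\omega_k) K z_k' = -P_N P_0'(i\omega_k) E_s' v_k' - P_N F' v_k'$ continues to hold with $z_k' = \Gamma' v_k'$. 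Feeding this into the perturbed analogue of \eqref{eq:appr_err1} again annihilates the $P_N$-component, so that $\|C_e'\Sigma' + D_e'\|^2 = \delta'$ retains the $(I - P_N)(\cdots)$ form, delivering the perturbed bound \eqref{eq:appr_error} with $\delta'$ replacing $\delta$. The core obstacle throughout is the algebra showing that \eqref{eq:sylsol_appr} solves the Sylvester equation and that the $P_N$-projected residual vanishes; once this projected identity is secured, both the nominal error estimate and its persistence under admissible perturbations follow from the same cancellation.
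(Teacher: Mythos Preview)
Your proposal is correct and follows essentially the same route as the paper's proof: exponential stability via Lemma \ref{lem:clstab}, the explicit Sylvester solution \eqref{eq:sylsol_appr} verified through the projected identity \eqref{eq:reg2appr}, the error decomposition from \cite[Lem.~4.3]{PauPoh14} combined with admissibility of $C_e$ and unitarity of $e^{St}$, and the $(I-P_N)$ cancellation to obtain \eqref{eq:delta}, all repeated verbatim for the perturbed system. The only small point to make explicit is that the second (controller) row of the Sylvester equation needs not only $\Gamma\phi_k\in\Nscr(i\omega_k-\Gscr_1)$ but also that the right-hand side vanishes, which follows because $\Gscr_2$ factors through $P_N$ and \eqref{eq:reg2appr} kills the projected residual; you have all the ingredients, just be sure to state that $\Gscr_2 P_N = \Gscr_2$ is what closes that row.
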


\begin{remark}
As an alternative to the error estimate given in \eqref{eq:appr_error}, one can make a coarser choice for $\delta$ that does not require $v_{\max}$:
$$
	\delta = \sum_{k=1}^q \left|\left|(I - P_N)\big(P_s(i\omega_k)Kz_k + P_0(i\omega_k)E_s\phi_k + F\phi_k\big)\right|\right|^2,
$$
where $\{\phi_k\}_{k=1}^q$ is the Euclidean basis of $W$ and $z_k = \Gamma\phi_k$.
\end{remark}

\begin{corollary} \label{cor:approx}
In Theorem \ref{thm:approxcontrol}, the regulation error satisfies $e^{\beta\cdot}P_Ne(\cdot) \in L^2([0, \infty); Y)$ for some $\beta > 0$ independent of $x_{e0} \in X_e$ and $v_0 \in W$. Under perturbations of class $\Oscr$ that give rise to an exponentially stable closed-loop system, the regulation error satisfies $e^{\beta'\cdot}P_Ne(\cdot) \in L^2([0, \infty); Y)$ for some $\beta' > 0$ independent of $x_{e0} \in X_e$ and $v_0 \in W$.

\begin{proof}
Let us first show that $P_NC_e\Sigma + P_N D_e = 0$. A direct computation using \eqref{eq:sylsol_appr} together with \eqref{eq:reg2appr} shows that for all $k \in \{1,2,\ldots, q\}$:
$$
	P_NC_e\Sigma \phi_k  + P_ND_e\phi_k
	  = P_NP_s(i\omega_k)K\Gamma\phi_k + P_NP_0(i\omega_k)E_s\phi_k + P_NF\phi_k = 0,
$$
and as $\{\phi_k\}_{k=1}^q$ form a basis of $\mathbb{C}^q$, we have that $P_NC_e\Sigma  + P_ND_e = 0$. By the proof of Theorem \ref{thm:approxcontrol} we now have for some $\beta > 0$ that
$$
\begin{aligned}
\int_t^{t+1}\|e^{\beta s}P_Ne(s)\|^2 \ud s & \leq e^{\beta(t+1)}\int_t^{t+1}\|P_Ne(s)\|^2\ud s \\
&  \leq e^\beta Me^{(\beta - \alpha)t}(\|x_{e0}\|^2 + \|v_0\|^2),
\end{aligned}
$$
so for any $0 < \beta < \alpha$ we obtain
$$
\begin{aligned}
	\int_0^\infty \|e^{\beta s}P_Ne(s)\|^2\ud s & \leq  e^\beta M(\|x_{e0}\|^2 + \|v_0\|^2) \sum_{t=0}^\infty e^{(\beta-\alpha) t}\\
	& =  \frac{e^\beta M(\|x_{e0}\|^2 + \|v_0\|^2)}{1 - e^{\beta-\alpha}},
\end{aligned}
$$
by which $e^{\beta \cdot}P_Ne(\cdot) \in L^2([0, \infty))$ for any $0 < \beta < \alpha$. By the robustness part of Theorem \ref{thm:approxcontrol}, the same holds for some $0 < \beta'< \alpha'$ under perturbations of class $\Oscr$ that give rise to an exponentially stable closed-loop system.
\end{proof}
\end{corollary}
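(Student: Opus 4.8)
The plan is to exploit the error representation already used in the proof of Theorem~\ref{thm:approxcontrol},
$$
	e(t) = C_e\mathbb{T}_e(t)(x_{e0} - \Sigma v_0) + (C_e\Sigma + D_e)v(t),
$$
and to observe that although the persistent term $(C_e\Sigma + D_e)v(t)$ does not decay, its projection onto $Y_N$ vanishes identically. The whole corollary then reduces to establishing the single identity $P_N(C_e\Sigma + D_e) = 0$; everything after that is a routine exponential-weighting argument.

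The first and decisive step is to verify $P_N C_e\Sigma + P_N D_e = 0$. I would test this on the Euclidean basis $\{\phi_k\}_{k=1}^q$ of $W$. By the same direct computation using \eqref{eq:sylsol_appr} that produced \eqref{eq:appr_err1}, one has, for each $k$,
$$
	C_e\Sigma\phi_k + D_e\phi_k = P_s(i\omega_k)K\Gamma\phi_k + P_0(i\omega_k)E_s\phi_k + F\phi_k.
$$
Applying $P_N$ and invoking the design relation \eqref{eq:reg2appr}, namely $P_N P_s(i\omega_k)K\Gamma\phi_k = -P_N P_0(i\omega_k)E_s\phi_k - P_N F\phi_k$, collapses the right-hand side to zero for every $k$. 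Since the $\phi_k$ span $W$, this gives $P_N C_e\Sigma + P_N D_e = 0$. This is the place where the specific controller structure actually enters---the exact internal model $\Gscr_1 = \operatorname{diag}(i\omega_k I_{Y_N})$ together with the bounded invertibility of $P_N P_s(i\omega_k)K_0^k$ guaranteed by \eqref{eq:approxsc}---and it is the only genuinely nontrivial step.

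With the persistent term annihilated, projecting the error representation yields $P_N e(t) = P_N C_e\mathbb{T}_e(t)(x_{e0} - \Sigma v_0)$, so the estimate $\int_t^{t+1}\|P_N e(s)\|^2\ud s \leq M e^{-\alpha t}(\|x_{e0}\|^2 + \|v_0\|^2)$ is inherited directly from the proof of Theorem~\ref{thm:approxcontrol}, using that $\Sigma$ is bounded, $\mathbb{T}_e$ is exponentially stable, and $C_e$ (hence $P_N C_e$) is admissible for $\mathbb{T}_e$. I would then insert the weight $e^{\beta s}$, estimate $e^{\beta s}\leq e^{\beta(t+1)}$ on each interval $[t,t+1]$, and sum the resulting geometric series over integer $t$; for any $0<\beta<\alpha$ this gives $\int_0^\infty\|e^{\beta s}P_N e(s)\|^2\ud s<\infty$, i.e.\ $e^{\beta\cdot}P_N e(\cdot)\in L^2([0,\infty);Y)$ with $\beta$ independent of the initial data.

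For the robustness claim I would repeat the argument for perturbed parameters $(\Ascr',\Bscr',\Cscr',E',F')\in\Oscr$ giving an exponentially stable perturbed closed-loop system. The essential point is that the perturbed controller still produces a solution of the analogue of \eqref{eq:reg2appr}---precisely the robustness built into the design and already exploited at the end of the proof of Theorem~\ref{thm:approxcontrol}---so that $P_N C_e'\Sigma' + P_N D_e' = 0$ continues to hold. The same weighting-and-summation argument then yields $e^{\beta'\cdot}P_N e(\cdot)\in L^2([0,\infty);Y)$ for some $0<\beta'<\alpha'$, again independent of the initial data.
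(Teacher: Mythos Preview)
Your proposal is correct and follows essentially the same approach as the paper's own proof: establish $P_N C_e\Sigma + P_N D_e = 0$ by testing on the basis $\{\phi_k\}$ via \eqref{eq:sylsol_appr} and \eqref{eq:reg2appr}, then use the windowed estimate from Theorem~\ref{thm:approxcontrol} together with the weight $e^{\beta s}\leq e^{\beta(t+1)}$ on $[t,t+1]$ and a geometric-series summation, finally invoking the robustness part of Theorem~\ref{thm:approxcontrol} for the perturbed case. The only cosmetic difference is that you spell out the intermediate identity $P_N e(t) = P_N C_e\mathbb{T}_e(t)(x_{e0}-\Sigma v_0)$ explicitly, whereas the paper jumps straight to the integral bound.
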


\subsection{A robust controller} \label{ROR:rrc}

In this section, we utilize the approximate controller structure of the previous section to construct an exact robust controller which, however, necessarily has infinite-dimensional state space if the output space of the plant is infinite-dimensional. Thus, we choose $Z = Y^q$ and choose the controller parameter $Q \in \Lscr(U,Y)$ such that the semigroup $\mathbb{T}_s$ generated by $A_s$ is exponentially stable and $\Cscr$ is an admissible observation operator for $\mathbb{T}_s$. Following \cite[Sec. IV]{Pau16a} or \cite[Thm. 8]{HumPau17}, we choose the remaining parameters as
\begin{subequations}
\begin{align}
	\Gscr_1 & = \operatorname{diag}\left(i\omega_1I_Y,i\omega_2I_Y, \ldots, i\omega_qI_Y\right) \in \Lscr(Z),  \label{eq:rrcG1} \\
	K & = \epsilon K_0 = \epsilon\left[K_0^1,K_0^2, \ldots, K_0^q\right] \in \Lscr(Z,U),  \label{eq:rrcK} \\
	\Gscr_2 & = (-(P_s(i\omega_k)K_0^k)^*)_{k=1}^q 
	 \in \Lscr(Y,Z). \label{eq:rrcG2}
\end{align}
\end{subequations}
Above the components $K_0^k$ can be chosen freely provided that $P_s(i\omega_k)K_0^k$ are invertible. If we choose $K_0^k = P_s(i\omega_k)^{[-1]}$, then $\Gscr_2^k = -I_Y$ for all $k \in \{1,2,\ldots q\}$, then the controller is the same as the approximate controller for the choice $Y_N = Y$. The following result follows immediately from Corollary \ref{cor:approx}.

\begin{corollary}
There exists an $\epsilon^* > 0$ such that a controller with the parameter choices given in \eqref{eq:rrcG1}--\eqref{eq:rrcG2} solves the robust output regulation problem for all $0 < \epsilon < \epsilon^*$.
\label{cor:robregcontr}
\end{corollary}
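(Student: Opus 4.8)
The plan is to recognize the robust controller \eqref{eq:rrcG1}--\eqref{eq:rrcG2} as the special case $Y_N = Y$ of the approximate controller \eqref{eq:acG1}--\eqref{eq:acG2} and then read off Corollary \ref{cor:approx} with the projection $P_N$ replaced by the identity $I_Y$. First I would check that the two parameter families coincide: taking $Y_N = Y$ forces $P_N = I_Y$, so \eqref{eq:acG1} becomes \eqref{eq:rrcG1} and \eqref{eq:acK} becomes \eqref{eq:rrcK}, while setting $\Gscr_{20}^k := -(P_s(i\omega_k)K_0^k)^*$ turns \eqref{eq:acG2} into \eqref{eq:rrcG2}. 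It then remains to verify the spectrum condition \eqref{eq:approxsc}, which with $P_N = I_Y$ reads
$$
\sigma\bigl(\Gscr_{20}^k P_s(i\omega_k)K_0^k\bigr) = \sigma\bigl(-(P_s(i\omega_k)K_0^k)^* P_s(i\omega_k)K_0^k\bigr) \subset \mathbb{C}_-,
$$
and holds because each $P_s(i\omega_k)K_0^k$ is invertible (e.g.\ equal to $I_Y$ for $K_0^k = P_s(i\omega_k)^{[-1]}$, which is legitimate since $P_s(i\omega_k)$ is surjective), so that $(P_s(i\omega_k)K_0^k)^* P_s(i\omega_k)K_0^k$ is strictly positive. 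Thus the full hypotheses of Theorem \ref{thm:approxcontrol} and Corollary \ref{cor:approx} are in force.

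Next I would apply Lemma \ref{lem:clstab} to produce an $\epsilon^* > 0$ such that for every $0 < \epsilon < \epsilon^*$ the closed-loop system \eqref{eq:clloopsys} is exponentially stable; this is condition (1) of the output regulation problem. Corollary \ref{cor:approx}, read with $P_N = I_Y$, then yields a $\beta > 0$ independent of the initial states with $e^{\beta\cdot} e(\cdot) = e^{\beta\cdot} P_N e(\cdot) \in L^2([0,\infty);Y)$, which is precisely condition (2) with $\alpha = \beta$, so the controller solves the output regulation problem. For robustness, the second assertion of Corollary \ref{cor:approx} gives, for every perturbation of class $\Oscr$ that keeps the closed loop exponentially stable (and which does not alter the controller-side object $P_N = I_Y$), a $\beta' > 0$ with $e^{\beta'\cdot} e(\cdot) = e^{\beta'\cdot} P_N e(\cdot) \in L^2([0,\infty);Y)$, exactly condition (2) of the robust output regulation problem. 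Combining the two, the controller solves the robust output regulation problem for all $0 < \epsilon < \epsilon^*$.

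I do not expect a genuine obstacle, since the statement is a specialization: the entire content is that the approximation error $\delta$ in \eqref{eq:delta} vanishes once $P_N = I_Y$, so that the merely approximate regulation of Theorem \ref{thm:approxcontrol} becomes exact. The only point requiring care is the bookkeeping that confirms \eqref{eq:rrcG1}--\eqref{eq:rrcG2} is a valid instance of \eqref{eq:acG1}--\eqref{eq:acG2}, in particular that \eqref{eq:approxsc} survives with $P_N = I_Y$; as shown above this is immediate from the invertibility of $P_s(i\omega_k)K_0^k$. As a consistency check one could instead observe that $(\Gscr_1, \Gscr_2)$ with $Z = Y^q$ satisfies the $\Gscr$-conditions of Definition \ref{def:gconditions}, so that Theorem \ref{thm:gcond+expstab->ror} delivers the same conclusion directly from the exponential stabilization and regularity of the closed loop.
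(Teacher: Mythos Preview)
Your proposal is correct and follows the same route as the paper, which simply states that the result follows immediately from Corollary~\ref{cor:approx} (i.e., the case $Y_N=Y$, $P_N=I_Y$). Your alternative consistency check via Lemma~\ref{lem:clstab} and Theorem~\ref{thm:gcond+expstab->ror} is precisely the content of the remark the paper places immediately after the corollary.
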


\begin{remark}
The above result also follows from Lemma \ref{lem:clstab} and Theorem \ref{thm:gcond+expstab->ror} as  the choice $K_0^k = P_s(i\omega_k)^{[-1]}$ yields $\sigma(\Gscr_2^kP_s(i\omega_k)K_0^k) = \sigma(-I_Y) \subset \mathbb{C}_-$, which together with the choice of $Q$ completes the assumptions of Lemma \ref{lem:clstab}, by which the closed-loop system is exponentially stable. Furthermore, it has been shown in the proof of \cite[Thm 8]{Pau16a} that $\Gscr_1$ and $\Gscr_2$ in \eqref{eq:rrcG1}--\eqref{eq:rrcG2} satisfy the $\Gscr$-conditions, and thus,  the controller solves the robust output regulation problem by Theorem \ref{thm:gcond+expstab->ror}.
\end{remark}

\section{Approximate robust regulation of the wave equation} \label{sec:waveex} 

Consider the wave equation as given in \eqref{eq:waveintro} with the spatial domain $\Omega := \set{\zeta \in \mathbb{R}^2 \mid 1 < \|\zeta\| < 2}$. Choose the partition $\partial \Omega = \Gamma_0 \cup \Gamma_1$ where $\Gamma_0 = \set{\zeta \in \partial\Omega \mid \|\zeta\| = 1}$ and $\Gamma_1 = \set{\zeta \in \partial\Omega \mid \|\zeta\| = 2}$ which satisfies the assumption in \eqref{eq:bdrpart}, e.g., for $\zeta_0 = 0$, and thus the results presented in Section \ref{sec:damper} are applicable.

For the approximate robust output regulation problem, let $\delta = 0.01$ be given. We choose the output space as $Y := L^2(\Gamma_1)$ which is equivalent to $L^2([0, 2\pi])$. Thus, for the finite-dimensional closed subspace $Y_N$ we may choose, e.g., 
$$
	Y_N := \operatorname{span}\set{1,\cos(k\cdot),\sin(k\cdot)\, | \, k = 1,\ldots,N},
$$
and the projection $P_N$ from $Y$ onto $Y_N$ is then given by
\begin{equation}
  \label{eq:PN}
	P_N y := \frac{1}{\sqrt{2\pi}}\left\langle y, 1 \right\rangle + \frac{1}{\sqrt{\pi}} \sum_{k=1}^N \left(\left\langle y, \cos(k\cdot)\right\rangle
	+ \left\langle y, \sin(k\cdot)\right\rangle\right).
\end{equation}
By standard Fourier analysis, it holds that for all $f \in L^2([0, 2\pi])$, we have
$\displaystyle \lim_{N \to \infty} \left\|(1 - P_N)f \right\| = 0$, and thus, by Theorem \ref{thm:approxcontrol}, for a given reference $y_{ref}$, we can choose $N$ in \eqref{eq:PN} sufficiently large such that asymptotically the regulation error becomes smaller than $\delta\|v_0\|^2$ (in the $L^2$-sense).

Let the reference and disturbance signals be given by
\begin{align*}
y_{ref}(\theta, t)&  = -\frac{1}{2\pi^2}(\pi - \theta)^2\sin(\pi t) -
         \frac{1}{2}\sin\left(\frac{\theta}{2}\right)\cos(2\pi t) \\
  d(\theta, t) & = \cos(\theta)\sin(2\pi t) + \sin(\theta)\sin(\pi t)
\end{align*}
and the disturbance $d$ acts on $\Gamma_1$. Thus, we choose $S = \operatorname{diag}(-2i\pi, -i\pi , i\pi, 2i\pi)$, and
the operators $E$ and $F$ are chosen such that $y_{ref}= -Fv$ and $d = Ev$ for $v_0 = 1$. The controller parameter $Q$ is chosen as $Q(\theta) = 3$, and according to \S\ref{ROR:arrc} we choose
\begin{subequations}
\begin{align}
	\Gscr_1 & = \operatorname{diag}(-2i\pi I_{Y_N},-i\pi I_{Y_N},i\pi I_{Y_N}, 2i\pi I_{Y_N}), \\
	K & = \epsilon\left[K_0^1, K_0^2, K_0^3, K_0^4\right] \\
	\Gscr_2 & = (-P_N)_{k=1}^4
\end{align}
\end{subequations}
where $K_0^k = \left(P_NP_s(i\omega_k)\right)^{[-1]}$, $N=5$ and $\epsilon = 0.15$.

For simulation, the operators related to the wave equation are approximated by the orthonormal eigenfunctions of the Laplacian $\Delta$ with homogeneous boundary conditions. In polar coordinates, these are of the form
\begin{align*}
\phi_{n0}^1(r) & = \frac{1}{\sqrt{2\pi}} \varphi_{n0}(r), \quad & n \in\mathbb{N} \\
\phi_{nm}^1(r,\theta) &  = \frac{1}{\sqrt{\pi}}\varphi_{nm}(r)\cos(m\theta), \quad & m,n\in\mathbb{N} \\
\phi_{nm}^2(r,\theta) & = \frac{1}{\sqrt{\pi}}\varphi_{nm}(r)\sin(m\theta), \quad & m,n\in\mathbb{N},
\end{align*}
where $\varphi_{nm}(r)$ are the appropriately normalized Bessel functions corresponding to the radial part of the Laplacian such that the functions $\{\phi_{nm}^{1,2}\}$ form an orthonormal basis of $L^2(\Omega)$. The eigenvalues are computed numerically and in the simulation we use $n=8$ radial and $m+1=12$ angular eigenfunctions corresponding to the eigenvalues. The transfer function $P_s$ is computed using the approximated operators, and the initial conditions are given by $x_0 = 0$ and $z_0 = 0$. 

In Figure \ref{fig:y}, the output profile $y$ of the controlled wave equation and the reference profile $y_{ref}$ are displayed for $t \in [0, 10]$\footnote{Matlab codes for the simulation are available at \url{https://codeocean.com/capsule/93a9a1bb-8511-4fd1-8030-98270c1c4cde/}}. It can be seen that the output starts to follow the reference signal rather soon, even though some undershooting can be observed throughout the simulation.

\begin{figure}[htp]
\centerline{\includegraphics[width=\columnwidth]{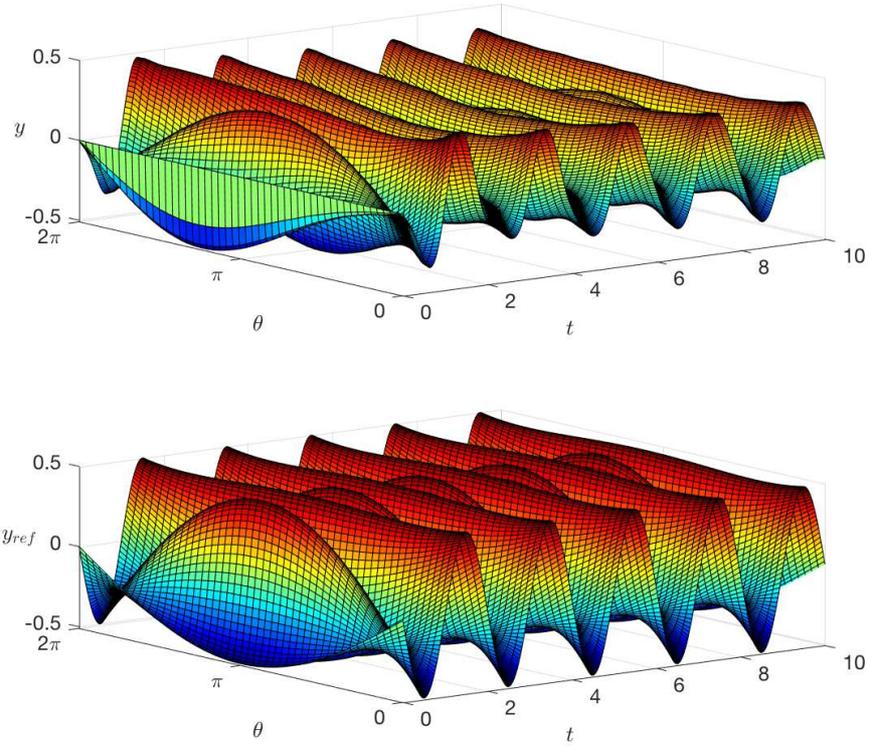}}
\caption[]{The output profile $y$ of the controlled wave equation and the reference profile $y_{ref}$ for $t \in [0, 10]$ and in the same scales.}
  \label{fig:y}
\end{figure}

In Figure \ref{fig:e}, the time average of the norm of the regulation error is displayed for $t \in [0, 20]$. Here it can be seen that, apart from the oscillations and initial errors, the regulation error decays at an exponential rate and that asymptotically it decays beyond the given $\delta\|v_0\|^2$. In Figure \ref{fig:x20}, the wave profile of the controlled system is displayed at time $t = 9$ and in Figure \ref{fig:d}, the disturbance signal is displayed for $t \in [0, 6]$.

\begin{figure}[htp]
\centerline{\includegraphics[width=\columnwidth]{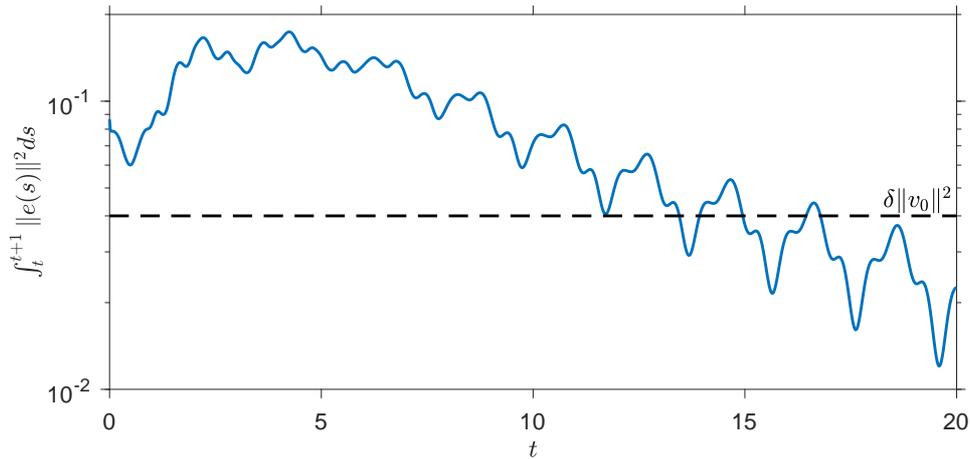}}
\caption[]{The regulation error $\displaystyle \int\limits_t^{t+1}\|y(s) - y_{ref}(s)\|^2ds$ for $t \in [0, 20]$.}
  \label{fig:e}
\end{figure}

\begin{figure}[htp]
\centerline{\includegraphics[width=\columnwidth]{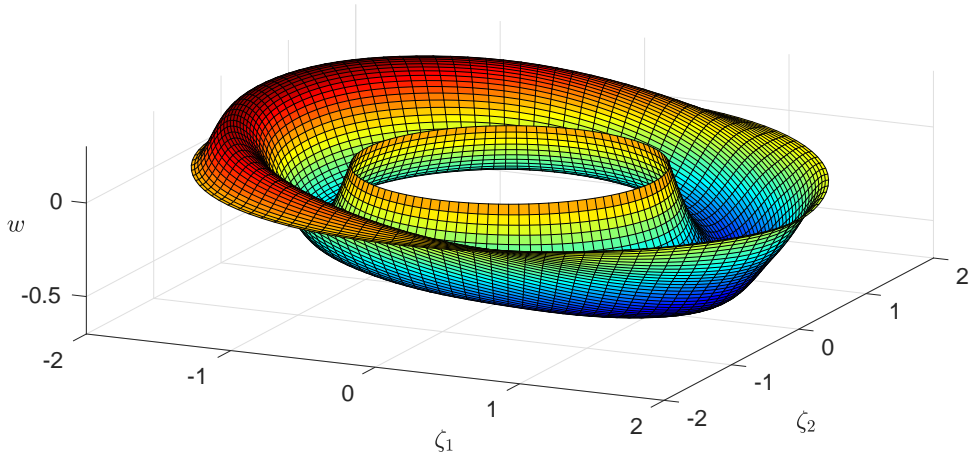}}
\caption[]{The wave profile of the controlled system at $t = 9$.}
  \label{fig:x20}
\end{figure}

\begin{figure}[!htp]
\centerline{\includegraphics[width=\columnwidth]{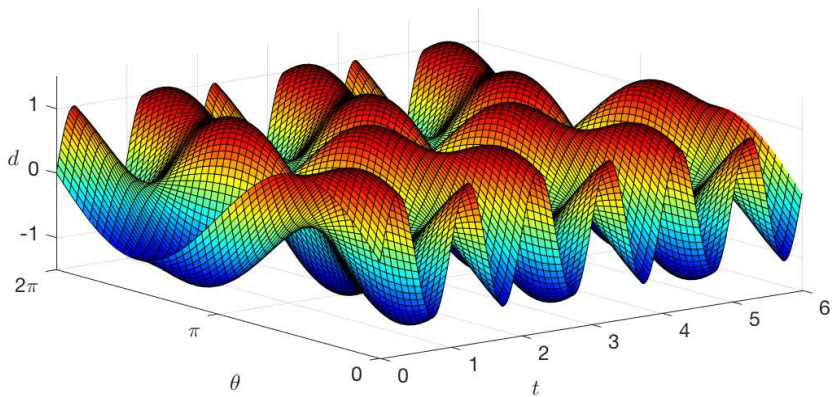}}
\caption[]{The disturbance signal $d$ for $t \in [0, 6]$.}
  \label{fig:d}
\end{figure}

\section{Conclusions} \label{sec:concl}

We developed output regulation for abstract boundary control systems, parametrizing all regulating and robust regulating controllers, and also suggesting some particular choices of such controllers. Since the internal model principle implies that the state space of any robust controller for a system with infinite-dimensional output space has infinite dimension, we extended the concept of approximate robust output regulation to boundary control systems. We demonstrated that approximate robust regulation can be achieved with a finite-dimensional controller by constructing such a controller for the two-dimensional wave equation and demonstrating its performance with numerical simulations.

\def\cprime{$'$} \def\cprime{$'$}

\end{document}